\theoremstyle{plain}
\newtheorem{theorem}{Theorem}[section]
\newtheorem{proposition}[theorem]{Proposition}
\newtheorem{corollary}[theorem]{Corollary}
\theoremstyle{definition}
\newtheorem{definition}[theorem]{Definition}
\newtheorem{example}[theorem]{Example}
\newtheorem{remark}[theorem]{Remark}
\numberwithin{equation}{section}
\newcommand{\CC}{\mathbb{C}}
\newcommand{\QQ}{\mathbb{Q}}
\newcommand{\RR}{\mathbb{R}}
\newcommand{\ZZ}{\mathbb{Z}}
\newcommand{\NN}{\mathbb{N}}
\newcommand{\ts}{\textsc}
\begin{document}

\title[]{GKM theory for orbifold stratified spaces and application to singular toric varieties}

\author[S. Sarkar]{Soumen Sarkar}
\address{Department of Mathematics, Indian Institute of Technology Madras, Chennai 600036, India}
\email{soumensarkar20@gmail.com}

\author[J. Song]{Jongbaek Song}
\address{School of Mathematics, KIAS, 85 Hoegiro Dongdaemun-gu, Seoul 02455, Republic of Korea}
\email{jongbaek.song@gmail.com}

\subjclass[2010]{Primary 14F43, 55N91; Secondary 19L47, 57R85, 57R91}
\keywords{generalized equivariant cohomology, cohomology, K-theory, complex cobordism, toric variety}

\maketitle 
\abstract We study the GKM theory for a equivariant stratified space having orbifold structures in tis successive quotients.  Then, we introduce the notion of an \emph{almost simple polytope}, as well as  a \emph{divisive toric variety} generalizing the concept of a divisive weighted projective space. We employ the GKM theory to compute the generalized equivariant cohomology theories of toric varieties associated to almost simple polytopes and divisive toric varieties. 
\endabstract

\section{Introduction}
A toric variety of complex dimension $n$ is a complex algebraic variety 
with an action of the algebraic torus $(\CC^\ast)^n$ having an open dense orbit. 
It is equipped with a natural action of compact $n$-dimensional torus $T^n$.
The category of toric varieties has been one of the main attractions in algebraic and symplectic geometry from the beginning of 1970s.  
One of the reasons is their rich interaction with different 
fields of mathematics, such as  representation 
theory and combinatorics. For instance, one may get a lattice polytope $P$
from a projective toric variety $X$ by the convexity theorem \cite{Ati-conv, GS} and 
vice versa by Delzant's construction \cite{Del, Gui}. Moreover, the lattice points in $P$
give a weight decomposition of $H^0(X, \mathcal{L})$ as a torus representation, 
where $\mathcal{L}$ is a very ample line bundle over $X$. 

From the topological point of view, such a correspondence leads us to ask how to extract topological invariants for a toric variety from the associated combinatorics, i.e., lattice polytopes or their normal fans. Indeed, there is a rich and vast literature dealing with this question for 
several invariants. For example, we refer to 
\cite{Dan,  DJ, Jur},  \cite{Mor}, \cite{BB}, \cite{BR-MU} for 
non-equivariant cohomology theories,  and \cite{Bag, VV} for equivariant cohomology theories. 
However, most of the computations are focused on the category of smooth toric varieties. 

Now, we change gears to singular toric varieties. Simplicial toric varieties, namely toric varieties having at worst orbifold singularities, may be the mildest class of singular toric varieties. Their ordinary cohomology and Borel equivariant cohomology over rational coefficients behave in a similar manner to smooth toric varieties (see \cite[Section 12.3]{CLS}), while their integral cohomology theories are known only for particular classes under some hypothesis \cite{Ka, Amr, BSS, BNSS}. 

For even worse singular toric varieties, their topological invariants are far away from the situation of smooth toric varieties. For example, their ordinary cohomology may not vanish in odd degrees
in general, which complicates the computation of some spectral sequences
such as Leray--Serre or Atiyah--Hirzebruch spectral sequences.   



In this paper, we introduce the concept of an \emph{orbifold stratified $G$-space} $X$ for some topological group $G$, i.e., $X$ is a finite $G$-CW complex with an equivariant stratification 
\begin{equation}\label{eq_orb_stra}
X_1 \subseteq X_2 \subseteq \cdots \subseteq X_\ell=X
\end{equation}
such that each of the successive quotients $X_j/X_{j-1}$ is homeomorphic to the Thom space of an orbifold $G$-vector bundle and $X_j - X_{j-1}$ is equipped with an effective orbifold structure. We note that the total  space $X$ may have arbitrary singularities. For instance, toric varieties associated to polytopes illustrated in Examples \ref{ex_cone_on_simple_poly}, \ref{ex_GZ_polytope} and \ref{ex_BIP} are singular, but not orbifolds. A relevant concept of an orbifold stratified space is studied in \cite[Definition 1.1]{CLW} in the language of Lie groupoid. 

The main purpose of this paper is to study the generalized GKM theory for the category of orbifold $G$-stratified spaces as in \eqref{eq_orb_stra}. Over this category, we give a concrete description of complex-oriented generalized equivariant cohomologies with rationals, namely, we consider $E_{G}^\ast(-)\otimes_\ZZ \QQ$.  For example,  $E_{G}^\ast$ can be Borel equivariant cohomology $H_{G}^\ast$ or complex equivariant $K$-theory $K_{G}^\ast$ following \cite{segal}. In particular, if the orbifold singularity of $X_j-X_{j-1}$ is trivial for all $j$, we recover the main results of \cite{HHH, HHRW}.

This paper is organized as follows. In Section \ref{sec:GenEqCohom}, 
we discuss the notion of a simple orbifold $G$-bundle and the equivariant Thom isomorphism as a foundation of the GKM theory for orbifold stratified spaces. Then, we introduce the definition of an orbifold $G$-stratification and verify how the generalized GKM theory of \cite{HHH} can be extended to the category of orbifold $G$-stratifications. 


Section \ref{sec:toric_variety} is devoted to a combinatorial characterization 
of toric varieties for which our main results hold. Such a class of toric varieties 
may have arbitrary toric singularities beyond orbifold singularities. Here, we 
bring the idea of \emph{retraction sequence} \cite{BSS} of a simple polytope and extend this to the category of general convex polytopes. This allows us to give an orbifold torus-equivariant stratification on the corresponding toric variety, see Theorem \ref{thm:q-cw_srtucture}. For these toric varieties, we give the GKM theoretic description of generalized cohomology theories in Proposition \ref{prop_GKM-description}.

In Section \ref{sec:PPandAPPLICATIONS}, we summarize the concept of 
a piecewise algebra associated to a fan, which is studied in \cite[Section 4]{HHRW}. Then, we establish Theorem \ref{thm_main_E*_T=PP} describing $E_{T^n}^\ast(X)\otimes \QQ$ 
where $X$ is a singular toric variety discussed in Section \ref{sec:toric_variety} and $T^n$ is the compact torus acting on $X$. 

Finally, generalizing the idea of a divisive weighted projective space, 
we introduce the notion of a \emph{divisive toric variety} in Section \ref{sec:divisive_toric} 
to compute generalized equivariant cohomologies with integers. 
The conclusion is stated in Proposition \ref{prop_divisive_PP}.

We close the introduction with some previous works relevant to the study of this article. The author of \cite{Gon} considers `$\QQ$-filterable spaces' and if they are projective $T$-varieties then they have a stratification similar to \eqref{eq_orb_stra} where $X_j - X_{j-1}$ is a `rational cell' which may not be an orbifold. Under the assumption of `$T$-skeletal', he studies GKM-theory to obtain the Borel equivariant cohomology of those spaces. Nevertheless we are also interested in other generalized equivariant cohomology theories. 

The authors of \cite{HW} studies equivariant $K$-theory of toric varieties associated to `fans with distant singular cones', where they use Mayer--Vietories sequence to show their main theorem \cite[Theorem 7.2]{HW}. There are many singular toric varieties beyond their consideration with stratification as in  \eqref{eq_orb_stra}. For instance, simplicial toric varieties such that all fixed points are singular are excluded from their study.  

We also note that \cite{DKU}, \cite{SU} and \cite{Sar} discuss integral equivariant cohomology theories for (quasi)toric manifolds, toric orbifolds \cite{DJ} and locally standard torus orbifolds \cite{HM}, respectively. 
We emphasize that there are many interesting toric varieties which are not orbifolds, such as the Gelfand--Zetlin toric variety in Example \ref{ex_GZ_toric_PP} or see \cite{KaVi}, whose integral generalized equivariant cohomology theories ($H_T^\ast, K_T^\ast$ and $MU_T^\ast$) can be described by using Proposition \ref{prop_divisive_PP}.

\subsection*{Acknowledgements}
The authors would like to express sincere gratitude to anonymous referees for  helpful comments. The first author would like to thank the international relation office of IIT-Madras and SERB India for MATRICS grant MTR/2018/000963. The second author has been supported by Basic Science Research Program through the National Research Foundation of Korea (NRF) funded by the Ministry of Education (NRF-2018R1D1A1B07048480) and a KIAS Individual Grant (MG076101) at Korea Institute for Advanced Study.


\section{GKM theory for orbifold stratified spaces}
\label{sec:GenEqCohom}
The goal of this section is to apply the GKM theory studied in \cite{HHH} to the category of equivariant stratified $G$-spaces, where certain orbifold structures are involved in their successive quotients. By a $G$-space we mean a finite $G$-CW complex for a topological group $G$. In this paper, we are interested in 
$G$-equivariant cohomology theory $E_G^\ast$ associated to a ring $G$-spectrum $E$ as defined in  \cite[Chapter XIII]{May}. We note that $E^\ast_G(X)$ is a commutative ring together with the structure of $E^\ast_G(pt)$-algebra induced from the equivariant collapsing map $X \to \{pt\}$. In particular, we study $E^\ast_G(X) \otimes \QQ$ for a $G$-space $X$. For simplicity, we use $E^*_G(X)$ in place of $E^*_G(X) \otimes \QQ$. Note that for Borel equivariant cohomology $H_G^\ast$, we have $H_G^\ast(X)\otimes \QQ\cong H_G^\ast(X;\QQ)$.

To establish the structure of stratification of a $G$-space, we begin with a complex $E$-orientable $G$-vector bundle $\xi \colon V \to B$ and a finite group $A$ acting linearly each fiber of $\xi$, which commutes with $G$-action on $V$ and preserves $E$-orientation of $\xi$. (See \cite[Chapter XVI, Definition 9.1]{May} for the definition of $E$-orientation.) Then, one may consider the induced fiber bundle 
$$\xi^A \colon V/A \to B,$$ 
which we call a {\it simple orbifold $G$-bundle}.
The associated disc bundle $D(V) \to B$ and the sphere bundle $S(V) \to B$ of $\xi$ are
invariant under $A$-action, as $A$ acts linearly on each fiber. 
Hence, one can define a \emph{$\mathbf{q}$-disc bundle} 
$D(V/A)(=D(V)/A) \to B$ and a \emph{$\mathbf{q}$-sphere
bundle} $S(V/A)(=S(V)/A) \to B$ in the usual manner, which yields 
the Thom space $ \mathrm{Th}(V/A) := D((V/A)/S(V/A))$ of $\xi^A$ and the map 
$$\tilde{\xi}^A \colon  \mathrm{Th}(V/A) \to B.$$ 
We refer to \cite[Section 4]{PS} and \cite[Section 2]{BNSS} for the notions of a $\mathbf{q}$-disc and a $\mathbf{q}$-sphere.

When the cohomology theory $E_G^\ast$ is Borel equivariant cohomology $H_G^\ast$ or equivariant $K$-theory $K_G^\ast$ (see \cite{segal}), then we have the equivariant Thom isomorphism for a simple orbifold $G$-bundle. 

\begin{proposition}[Thom isomorphism]\label{prop_thom_isomorphism}
Let $\xi^A \colon V/A \to B$ be a simple orbifold $G$-bundle of rank $n$. For cohomology theories $E_G^\ast=H_G^\ast$ or $K_G^\ast$, there exists a cohomology class $\eta_A \in E^n_G( \mathrm{Th}(V/A))$ such that 
$$\cup {\eta_A} \colon E_G^\ast (X) \to E_G^{\ast+n}( \mathrm{Th}(V/A))$$
is an isomorphism.
\end{proposition}
\begin{proof}
Consider the commutative diagram 
\begin{equation*}\label{eq_diag_Borel_fibrations}
\begin{tikzcd}
 \mathrm{Th}(V) \arrow{r}\arrow{d} & EG\times_G  \mathrm{Th}(V) \arrow{r} \arrow{d} & BG\arrow{d}{=}\\
  \mathrm{Th}(V/A) \arrow{r} & EG\times_{G}  \mathrm{Th}(V/A) \arrow{r} &   BG,
\end{tikzcd}
\end{equation*}
where vertical maps are projections induced from the $A$-action and two horizontal compositions are Borel fibrations. Applying Leray--Serre spectral sequence for two Borel fibrations, we get the isomorphisms 
\begin{align}
\label{eq_iso-1}H^\ast_G(\mathrm{Th}(V))&\cong H^\ast(\mathrm{Th}(V))\otimes_\QQ H^\ast(BG)\\
\label{eq_iso-2}H^\ast_{G}(\mathrm{Th}(V/A)) &\cong  H^\ast(\mathrm{Th}(V/A))\otimes_\QQ H^\ast(BG).
\end{align}
We note that $H^\ast(V) \cong H^\ast(V/A)$ and $H^\ast(V_0) \cong H^\ast(V_0/A)$ with rational coefficients, where $V_0$ denotes the complement of the zero section. Now, the long exact sequence of the pair $(V, V_0)$ together with the Five Lemma shows that $H^\ast(\mathrm{Th}(V))\cong H^\ast(\mathrm{Th}(V/A))$. We refer to \cite[Section 5.1]{PS} for more details. Hence, the left-hand sides of \eqref{eq_iso-1}, \eqref{eq_iso-2} and $H^\ast_G(\mathrm{Th}(V/A))$ are isomorphic.

Consider the commutative diagram
\begin{equation*}
\begin{tikzcd} 
V \arrow{d}{\xi} \arrow{r}{\rho} &V/A\arrow{d}{\xi^A} \\
B \arrow{r}{=} & B,
\end{tikzcd}
\end{equation*}
where $\rho$ is the canonical projection given by the action of $A$ on $V$. 
This induces the diagram
\begin{equation*}
\begin{tikzcd}
H^{\ast+n}_G(\mathrm{Th}(V)) & H_G^{\ast+n}(\mathrm{Th}(V/A)) \arrow{l}[swap]{\rho^\ast} \\
H^{\ast}_G(B)\arrow{u}{\cup \eta} & H^\ast_G(B)\arrow{l}{=}\arrow{u},
\end{tikzcd}
\end{equation*}
where the left vertical map $\cup \eta$ is the equivariant Thom isomorphism (see \cite[Theorem 9.2, Chapter XVI]{May}) given by the cup product of the equivariant Thom class $\eta\in H^\alpha_G(\mathrm{Th}(V))$. Therefore, the right vertical map is an isomorphism given by the cup product of the pull back $\eta_A:= (\rho^\ast)^{-1}(\eta)\in H^n_G(\mathrm{Th}(V/A))$. 

Next, to show the claim for the equivariant $K$-theory, we consider the composition $ch \circ \psi$, where 
$ch$ denotes the \emph{Chern character} from Borel equivariant $K$-theory to equivariant cohomology and $\psi$ is the canonical monomorphism from $K_G(\mathrm{Th}(V))$ to $K(EG\times_G \mathrm{Th}(V))$ defined by assigning the vector bundle $EG\times_G \xi$ to each $G$-equivariant vector bundle $\xi$, we refer to \cite{AS}. Applying the same composition to $\mathrm{Th}(V/A)$ and the usual Thom isomorphism theorem for a genuine vector bundle, 
we have the commutative diagram:
\begin{equation*}
\begin{tikzcd}[column sep=small]
K_G^{\ast}(B) \arrow{r}{\cong} &
K_G^{\ast+n}( \mathrm{Th}(V)) \arrow{r}{\psi} & 
K^{\ast+n}(EG\times_G \mathrm{Th}(V)) \arrow{r}{ch}& 
H^{\ast+n}_G(\mathrm{Th}(V))\\
K_G^{\ast}(B) \arrow{r}{\phi}\arrow{u}{=}& 
K_G^{\ast+n}( \mathrm{Th}(V/A)) \arrow{r}{\psi_A}\arrow{u}{f^\ast}& 
K^{\ast+n}(EG\times_G \mathrm{Th}(V/A)) \arrow{r}{ch} \arrow{u}& 
H^{\ast+n}_G(\mathrm{Th}(V/A)),\arrow{u}{\cong}
\end{tikzcd}
\end{equation*}
where we claim that $\phi$ is an isomorphism. Indeed, Chern characters are injective as we are working with rationals. Hence, the surjectivity and the injectivity of $f^\ast$ follow from the commutativity of the left most square and right two squares of the diagram, respectively. 
Therefore, we have $K_G^{\ast}(B) \cong K_G^{\ast+n}( \mathrm{Th}(V/A))$, induced from the Thom isomorphism $ K_G^{\ast}(B) \cong
K_G^{\ast+n}( \mathrm{Th}(V))$ for a genuine vector bundle $V\to B$. 
\end{proof}

Throughout this paper, we consider a simple orbifold $G$-bundle with rank $n$ over a topological space $B$ and assume that a complex-oriented $G$-equivariant cohomology theory $E_G^\ast$ has the Thom isomorphism 
\begin{equation*}\label{eq_Thom_iso}
E_G^\ast (B) \cong E_G^{\ast+n}( \mathrm{Th}(V/A))
\end{equation*}
for a simple orbifold $G$-bundle given by the cup product of the equivariant Thom class $\eta_A \in E_G^{\ast}( \mathrm{Th}(V/A))$. Then, one can define the equivariant Euler class $e_G(\xi^A) \in E^{\ast}_G(B)$ of a simple orbifold $G$-bundle $\xi^A$ by the restriction of $\eta_A$ to the zero section. 

Now we consider an equivariant stratification 
\begin{equation}\label{eq_g-tratification}
\{pt\}=X_1 \subseteq X_2 \subseteq  \cdots\subseteq X_{\ell-1} \subseteq X_\ell = X
\end{equation}
of a $G$-space $X$ such that each of the  successive quotients
$X_j/X_{j-1}$ is homeomorphic to the Thom space $\mathrm{Th}(V_j/A_j)$
of a simple orbifold $G$-bundle $\xi^{A_j} \colon V_j/A_j \to B_j$. 
Therefore, $X$ can be built from $X_1$ inductively by attaching
$\mathbf{q}$-disc bundles $D(V_j/A_j)$ to $X_{j-1}$ via some $G$-equivariant maps
$$\phi_j : S(V_j/A_j) \to X_{j-1},$$
which gives us a cofibrations 
\begin{equation}\label{eq_cofib_thom}
X_{j-1} \to X_j \xrightarrow{q} \mathrm{Th}(V_j/A_j)
\end{equation} 
for $j=2, \dots, \ell$. Now, one gets the following proposition by the induction on the stratification \eqref{eq_g-tratification}.

\begin{proposition}\label{prop_injective}
Let $X$ be an orbifold stratified $G$-space as in \eqref{eq_g-tratification}.  If each equivariant Euler class
$e_G(\xi^{A_j}) \in E^*_G(B_j)$ of the associated simple orbifold $G$-bundle $\xi^{A_j}$ is not a zero divisor, then the map 
\begin{equation}\label{eq_injectivity}
\iota^* \colon E^*_G(X) \to \prod_{j} E^*_G(B_j).
\end{equation}
induced from the inclusion $\iota \colon \bigsqcup B_j \hookrightarrow X$ is injective.
\end{proposition}
\begin{proof}
Essentially, the argument is similar to the proof of \cite[Theorem 2.3]{HHH}. Here we briefly make the foundation for the orbifold stratification \eqref{eq_g-tratification}. If the stratification \eqref{eq_g-tratification} has length 1, then $X_1 = B_1$ is a point. Therefore, \eqref{eq_injectivity} is an isomorphism.

Let the stratification \eqref{eq_g-tratification} have length $\ell$ and  assume that \eqref{eq_injectivity} is an injective map for any  stratification \eqref{eq_g-tratification} of length less than $\ell$. By the assumption on the stratification, we have the cofiber sequence \eqref{eq_cofib_thom} for each $j<\ell$. Since equivariant Euler classes are not zero divisors and a cohomology theory $E^\ast_G$ is considered to have the equivariant Thom isomorphism, we get a short exact sequence
$$0 \to E^*_G(\mathrm{Th}(V_j/A_j)) \xrightarrow{q^*} E^*_G(X_j) \to E^*_G(X_{j-1}) \to 0.$$
Hence, we have a commutative diagram
\begin{equation*}
\begin{tikzcd}
0 \arrow{r}& E^*_G(\mathrm{Th}(V_j/A_j)) \arrow{r}{q^*} \arrow{d}&E^*_G(X_j) \arrow{r}\arrow{d}& E^*_G(X_{j-1})\arrow{d} \arrow{r}& 0\\
0 \arrow{r} & E^\ast_G(B_j) \arrow{r}&\displaystyle \prod_{i\leq j}E_G^\ast(B_i) \arrow{r} & \displaystyle\prod_{i<j}E_G^\ast(B_i) \arrow{r} & 0,
\end{tikzcd}
\end{equation*}
where the left vertical map is injective, as $e_G(\xi^{A_j})$ is not a zero divisor. The right vertical map is also injective by the induction hypothesis. Now the Five Lemma completes the proof.
\end{proof}

To describe the image of $\iota^\ast$ in \eqref{eq_injectivity}, we set up the following assumptions on a stratified $G$-space $X$. 
\begin{enumerate}
\item[(\textbf{A}1)] Simple orbifold bundles $\xi^{A_j} \colon V_j/A_j \to B_j$ for $j=2, \dots \ell$ are $E$-orientable and have decompositions 
\begin{equation}\label{eq_orbibundle_decomp}
(\xi^{A_j} \colon V_j/A_j \to B_j) \cong \bigoplus_{s < j} (\xi^{A_{j,s}} 
\colon V_{j,s }/A_{j,s} \to B_j)
\end{equation}
into simple orbifold bundles $\xi^{A_{j,s}}$, 
possibly $V_{j,s}$ can be trivial. We note that \eqref{eq_orbibundle_decomp} is inherited from the  decomposition 
$$(\xi\colon V_j \to B_j) \cong \bigoplus_{s < j} (\xi \colon V_{j,s} \to B_j)$$ 
of vector bundles and $A_{j,s}$'s are the quotients of $A_j$ by non-effective kernels. 
\item[(\textbf{A}2)] The restriction $\phi_j|_{S(V_{j,s}/A_{j,s})}$ of the attaching map $\phi_j \colon S(V_j/A_j) \to X_{j-1}$ to $S(V_{j,s}/A_{j,s})$ satisfies 
$$\phi_j|_{S(V_{j,s}/A_{j,s})} = f_{j,s} \circ \xi^{A_{j,s}}$$ 
for some $G$-equivariant map $f_{j,s} \colon B_j \to B_{s}$,
identifying $B_{s}$ with its image in $X_{j-1}$ for each $s < j$. 
\item[(\textbf{A}3)] The equivariant Euler classes $e_G(\xi^{A_{j,s}})$ are not zero divisors
and pairwise relatively prime in $E^{\ast}_G(B_j)$. 
\end{enumerate} 

We remark that the $G$-invariant stratifications with trivial  $A_j$'s are
studied in \cite{HHH}. Nevertheless, under the above assumptions
on a $G$-space $X$ with the property as in \eqref{eq_g-tratification}, 
one may obtain the following proposition. 

\begin{proposition}\label{prop_image}
Let $X$ be an orbifold stratified $G$-space as in
\eqref{eq_g-tratification} and satisfy assumptions {\rm (\textbf{A}1)} to {\rm (\textbf{A}3)}. Then the image of $\iota^* \colon E^*_G(X) \to \prod_{j} E^*_G(B_j)$ is
$$\Gamma_X := \Big \{ (x_j) \in \prod_{1\leq j\leq \ell} E^*_{G}(B_j) ~ \Big{|} ~ e_G(\xi^{A_{j,s}})~|~ x_j - f^*_{j,s}(x_{s}) ~\text{for}~ s < j\Big \}.$$
\end{proposition}

\begin{proof}
The proof can be obtained from proof of \cite[Theorem 3.1]{HHH} by replacing genuine $G$-vector bundles $V_{j,s}\to B_{j}$ and their equivariant Euler classes into simple orbifold $G$-bundles $\xi^{A_{j,s}}\colon V_{j,s}/A_{j,s}\to B_j$ and corresponding equivariant Euler classes $e_G(\xi^{A_{j,s}})$. For the reader's convenience, we briefly outline the argument here. 

The proof goes by the induction on the filtration. For $X_1=\{pt\}$, the claim holds as $e_G(\xi^{A_1})=1$. Now, we suppose that the claim holds for $X_{j-1}$. From assumptions (\textbf{A}1) -- (\textbf{A}3) on the filtration, we have subspaces $X_{j,s}$ of $X$ for $1\leq s < j$ such that $$ X_{j,s} = B_s \cup_{f_{j,s} \circ \xi^{A_{j,s}}} D(V_{j,s}/A_{j,s}).$$
Then, following the proof of \cite[Lemma 3.5]{HHH} and Proposition \ref{prop_thom_isomorphism}, one can show $e_G(\xi^{A_{j,s}})$ divides $x_j - f_{j,s}^{\ast}(x_s)$ where $x_j$ and $x_s$ are pull-backs of a class $x \in E^*_G(X)$ under $B_j \hookrightarrow X$ and $B_s \hookrightarrow X$, respectively. 

Now we consider the natural restriction map $\gamma_j \colon \Gamma_j \to \Gamma_{j-1}$, where
$$\Gamma_i := \Big \{ (x_k) \in \displaystyle \prod_{k \leq i} E_G^*(B_k) ~\Big{|} ~ e_G(\xi^{A_{k,r}}) | x_k - f_{k,r}^*(x_r) ~\text{ for } r<k  \Big{\}}.$$
Then (\textbf{A}3) verifies that $\ker(\gamma_j) \cong e_G(\xi^{A_j}) E^*_G(B_j).$ 
Moreover, one can obtain a commutative diagram 
\begin{equation}\label{eq_comm_diag_GKM}
\begin{tikzcd}
0\arrow{r}&
E_G^\ast(X_j, X_{j-1}) \arrow{r}\arrow{d}{\cong} & 
E_G^\ast(X_j) \arrow{r}\arrow{d} & 
E_G^\ast(X_{j-1}) \arrow{r}\arrow{d}{\cong}& 
0 \\
0 \arrow{r} & 
\ker (\gamma_j) \arrow{r}& 
\Gamma_j \arrow{r} & 
\Gamma_{j-1} \arrow{r} & 
0,
\end{tikzcd}
\end{equation}
where the exposition about the validity of \eqref{eq_comm_diag_GKM} is given in \cite{HHH}. Finally, one can complete the proof by the Five Lemma.
%
\end{proof}

\begin{remark}
If all $A_{j}$'s associated to the stratification \eqref{eq_g-tratification} are trivial, then ${\rm Th}(V_j/A_j)={\rm Th}(V_j)$ which is the Thom space for a genuine vector bundle for each $j=1, \dots, \ell$.  A class of examples satisfying this condition will be discussed in Section \ref{sec:divisive_toric}. 
With this assumption, Proposition \ref{prop_injective} and Proposition \ref{prop_image} agree with the first part of Theorem 2.3 and Theorem 3.1 in \cite{HHH}, respectively.  
In this case, the cohomology theory $E_G^\ast$ can also be the complex cobordism $MU_G^\ast$. 
\end{remark}


\section{Toric varieties over almost simple polytopes}\label{sec:toric_variety}
In this section, we give a combinatorial characterization of toric varieties 
which is essential for the main results of this paper. 
Let $\Sigma$ be a full dimensional rational polytopal fan in $\RR^n$
and $P$ the lattice polytope whose normal fan is $\Sigma$. 
The corresponding toric variety $X_\Sigma$ 
is equipped with an action of compact torus $T^n \subset (\CC^\ast)^n$.
Here, we identify $\mathbb{R}^n$ with the Lie algebra of $T^n$.  

Following the result of \cite{Jur2} (we also refer to \cite[Theorem 12.2.5]{CLS}), 
there is a $T^n$-equivariant homeomorphism 
\begin{equation*}\label{eq_equiv_hoemo}
f\colon X_\Sigma \xrightarrow{\cong} (T^n\times P) /_\sim,
\end{equation*}
where $(t, p)\sim (s,q)$ whenever $p=q$ and $t^{-1}s$ is an element of
the subtorus $T_{F(p)}\subseteq T^n$ whose Lie algebra is generated by the outward 
normal vectors of the codimension-$1$ faces of $P$ which contain $p$ if $p$ is not in 
the interior of $P$.  When $p$ is in the interior of $P$, we consider 
$T_{F(p)}$ to be trivial.   

Here, we notice that $T^n$-action on  $(T^n\times P) /_\sim$ is 
induced from the multiplication on the first factor of $T^n\times P$ 
and the corresponding orbit map, 
\begin{equation}\label{eq_orbit_map}
\pi \colon (T^n\times P) /_\sim ~~ \to P,
\end{equation}
is given by $[t, p]_\sim \mapsto p$, where $[t,p]_\sim$ denotes the 
equivalence class of $(t,p)$. 
Therefore, 
the topology of a toric variety can be studied by the 
combinatorics of the orbit space $P$ and its geometric data, namely,
outward normal vectors of codimension-$1$ faces of $P$. 

We discuss the combinatorics of $P$ in Subsection 
\ref{subsec_ret_of_conv_polytope} and study some topological 
information of $X_P$ obtained from the geometry of $P$ 
in Subsection \ref{subsec_torus_eq_stratification}.

\subsection{Retraction sequence of a convex polytope}
\label{subsec_ret_of_conv_polytope}
The goal of this subsection is to introduce a combinatorial characterization of certain convex polytopes which was initiated in \cite{BSS}. 

Let $P$ be a convex polytope of dimension $n$. Regarding $P$ as a polytopal complex \cite[Definition 5.1]{Zie}, i.e., $P$ is the set of all its faces, we consider a finite sequence of triples 
\begin{equation*}\label{eq_ret_seq}
(P_1, Q_1, v_1) \to (P_2, Q_2, v_2)\to \cdots,
\end{equation*}
where $P_j$ is a polytopal subcomplex of $P$, $Q_j$ is a face of $P_j$ and $v_j$ is a vertex of $Q_j$, which are defined inductively as follows.

We set the initial term $(P_1, Q_1, v_1)$ such that $P_1=P$, $v_1$ is a vertex of $P_1$ having a neighborhood homeomorphic to $\RR^{n}_\geq$ as manifold with corners and $Q_1=P_1$ as an element of polytopal complex $P_1$. Given $(P_j, Q_j, v_j)$, the next term $(P_{j+1}, Q_{j+1}, v_{j+1})$ is defined by 
setting 
$$P_{j+1}=\bigcup \{Q\in P_j\mid v_j\notin V(Q)\},$$
where $V(Q)$ is the set of vertices of $Q$.
Next we choose a vertex $v_{j+1}$ of $P_{j+1}$ such that $v_{j+1}$ has a neighborhood  homeomorphic to $\RR^d_\geq$ as manifold with corners for some $1\leq d \leq n$. We call $v_{j+1}$ a \emph{free vertex} of $P_{j+1}$. 
A face $Q_{j+1}$ is defined to be the unique maximal face of $P_{j+1}$ containing $v_{j+1}$. 
Note that a free vertex may not exist in general, see Remark \ref{rmk_non-ret-polytope}.  Hence, we proceed to define a sequence if a free vertex exists. A sequence as defined above is called a \emph{retraction sequence} of $P$ if the sequence ends up with $(P_\ell, Q_\ell, v_\ell)$ such that $P_\ell=Q_\ell=v_\ell$ for some vertex $v_\ell$ of $P$, where $\ell$ denotes the cardinality of $V(P)$.
 
\begin{definition}
A convex polytope is called \emph{almost simple} if it admits 
at least one retraction sequence. 
\end{definition}

A simple convex polytope is almost simple. Indeed, 
for a simple convex polytope $P$ in $\RR^n$, a height function 
$\phi \colon \RR^n \to \RR$ which is \emph{generic} in the sense that each 
vertex of $P$ has different height defines a retraction sequence of $P$. We refer 
to \cite[Proposition 2.3]{BSS} for the details. Notice that not every retraction 
sequence can be obtained from a height function. Below, we introduce several examples 
of non-simple polytopes which are almost simple.

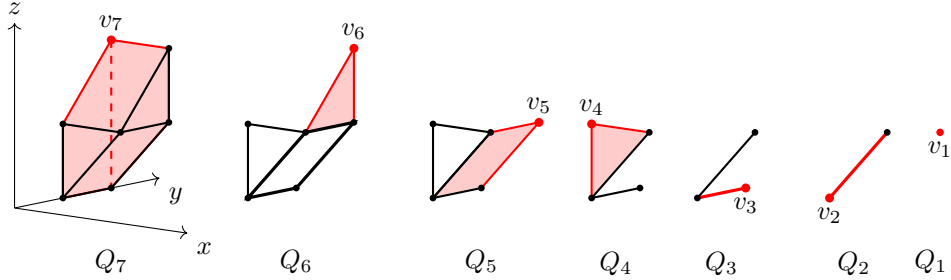
\begin{figure}
\tdplotsetmaincoords{80}{40}
\begin{tikzpicture}[tdplot_main_coords, scale=0.9, yscale=0.9]
\draw[->] (0,0,0) -- (3,0,0) node[anchor=north west]{$x$};
\draw[->] (0,0,0) -- (0,3,0) node[anchor=north west]{$y$};
\draw[->] (0,0,0) -- (0,0,2.5) node[anchor=south]{$z$};
\draw[thick, red, fill=red!20] (0,1,0)--(0,2,0)--(1,2,1)--(1,2,2)--(0,2,2)--(0,1,1)--cycle;
\draw[thick,red, dashed] (0,2,2)--(0,2,0);
\draw[thick] (0,1,1)--(0,1,0)--(0,2,0)--(1,2,1)--(1,2,2);
\draw[thick] (1,1,1)--(0,1,0);
\draw[thick] (1,1,1)--(0,1,1);
\draw[thick] (1,1,1)--(1,2,1);
\draw[thick] (1,1,1)--(1,2,2);
\node[fill=red, circle, inner sep=1.2pt] at (0,2,2) {};
\node[above] at (0,2,2) {\scriptsize$v_1$};

\node[fill, circle, inner sep=.9pt] at (0,1,0) {};
\node[fill, circle, inner sep=.9pt] at (0,1,1) {};
\node[fill, circle, inner sep=.9pt] at (1,1,1) {};
\node[fill, circle, inner sep=.9pt] at (1,2,1) {};
\node[fill, circle, inner sep=.9pt] at (1,2,2) {};
\node[fill, circle, inner sep=.9pt] at (0,2,0) {};
\node at (0,2,-1) {\scriptsize$P_1$};

\begin{scope}[xshift=70]
\draw[thick, red, fill=red!20] (1,1,1)--(1,2,2)--(1,2,1);
\draw[thick] (1,1,1)--(0,1,1)--(0,1,0)--cycle;
\draw[very thick] (1,1,1)--(1,2,1)--(0,2,0)--(0,1,0)--cycle;
\node[fill=red, circle, inner sep=1.2pt] at (1,2,2) {};
\node[above] at (1,2,2) {\scriptsize$v_2$};

\node[fill, circle, inner sep=.9pt] at (0,1,0) {};
\node[fill, circle, inner sep=.9pt] at (0,1,1) {};
\node[fill, circle, inner sep=.9pt] at (1,1,1) {};
\node[fill, circle, inner sep=.9pt] at (1,2,1) {};
\node[fill, circle, inner sep=.9pt] at (0,2,0) {};
\node at (0,2,-1) {\scriptsize$P_2$};
\end{scope}

\begin{scope}[xshift=140]
\draw[thick, white, fill=red!20] (1,1,1)--(1,2,1)--(0,2,0)--(0,1,0)--cycle;
\draw[thick] (1,1,1)--(0,1,0)--(0,2,0);
\draw[thick, red] (1,1,1)--(1,2,1)--(0,2,0);
\draw[thick] (1,1,1)--(0,1,1)--(0,1,0)--cycle;

\node[fill=red, circle, inner sep=1.2pt] at (1,2,1) {};
\node[above] at (1,2,1) {\scriptsize$v_3$};
\node[fill, circle, inner sep=.9pt] at (0,1,0) {};
\node[fill, circle, inner sep=.9pt] at (0,1,1) {};
\node[fill, circle, inner sep=.9pt] at (1,1,1) {};
\node[fill, circle, inner sep=.9pt] at (0,2,0) {};
\node at (0,2,-1) {\scriptsize$P_3$};
\end{scope}

\begin{scope}[xshift=200]
\draw[thick, white, fill=red!20] (1,1,1)--(0,1,1)--(0,1,0)--cycle;
\draw[thick, red] (0,1,1)--(0,1,0);
\draw[thick, red] (0,1,1)--(1,1,1);
\draw[thick] (1,1,1)--(0,1,0);
\draw[thick] (0,2,0)--(0,1,0)--cycle;
\node[fill, circle, inner sep=.9pt] at (1,1,1) {};
\node[fill, circle, inner sep=.9pt] at (0,1,0) {};
\node[fill=red, circle, inner sep=1.2pt] at (0,1,1) {};
\node[above] at (0,1,1) {\scriptsize$v_4$};

\node[fill, circle, inner sep=.9pt] at (0,2,0) {};
\node at (0,1.5,-0.95) {\scriptsize$P_4$};
\end{scope}

\begin{scope}[xshift=240]
\draw[thick] (1,1,1)--(0,1,0);
\draw[very thick, red] (0,2,0)--(0,1,0);
\node[fill=red, circle, inner sep=1.2pt] at (0,2,0) {};
\node[below] at (0,2,0) {\scriptsize$v_5$};

\node[fill, circle, inner sep=.9pt] at (0,1,0) {};
\node[fill, circle, inner sep=.9pt] at (1,1,1) {};
\node at  (0,1.5,-0.95) {\scriptsize$P_5$};
\end{scope}

\begin{scope}[xshift=290]
\draw[very thick, red] (1,1,1)--(0,1,0);
\node[fill, circle, inner sep=0.9pt] at (1,1,1) {};
\node[fill=red, circle, inner sep=1.2pt] at (0,1,0) {};
\node[below] at (0,1,0) {\scriptsize$v_6$};
\node at  (0,1.5,-0.95) {\scriptsize$P_6$};
\end{scope}

\begin{scope}[xshift=310]
\node[fill=red, circle, inner sep=1pt] at (1,1,1) {};
\node[below] at (1,1,1) {\scriptsize$v_7$};
\node at (0,2,-1) {\scriptsize$P_7$};
\end{scope}
\end{tikzpicture}
\caption{A retraction sequence of $3$-dimensional Gelfand--Zetlin polytope.}
\label{fig_ret_seq_of_GZ_polytope}
\end{figure}

\begin{example}\label{ex_cone_on_simple_poly}
A cone $C(P)$ on a simple polytope $P$ has a retraction sequence. Indeed, 
let 
$$
(P_1, Q_1, v_1) \to \cdots \to (P_\ell, Q_\ell, v_\ell)
$$
be a retraction sequence of $P$. Then, 
$$(C(P_1), C(Q_1), v_1) \to \cdots \to (C(P_\ell), C(Q_\ell), v_\ell) \to (\ast, \ast, \ast)$$
is a retraction 
sequence for $C(P)$, where $\ast$ is the apex of $C(P)$. 
Notice that $C(P)$ is not a simple polytope unless 
$P$ is a simplex. 
\end{example}

\begin{example}\label{ex_GZ_polytope}
Let $P$ be a $3$-dimensional polytope given by the system of inequalities described as follows:
\begin{figure}[H]
\begin{tikzpicture}[scale=0.6]
\node at (0,0) {$0$};
\node at (2,0) {$1$};
\node at (4,0) {$2$};

\node[rotate=-45] at (1/2, -1/2) {$\leq$};
\node[rotate=45] at (3/2, -1/2) {$\leq$};
\node[rotate=-45] at (5/2, -1/2) {$\leq$};
\node[rotate=45] at (7/2, -1/2) {$\leq$};

\node at (1,-1) {$x$};
\node at (3,-1) {$y$};

\node[rotate=-45] at (3/2, -3/2) {$\leq$};
\node[rotate=+45] at (5/2, -3/2) {$\leq$};

\node at (2,-2) {$z$};

\end{tikzpicture}
\end{figure}

\noindent It is a $3$-dimensional example of \emph{Gelfand--Zetlin} polytopes which plays an important role particularly in the algebro-geometric study of flag varieties. See Figure \ref{fig_ret_seq_of_GZ_polytope} for a pictorial description of a retraction sequence of $P$. One can also construct different retraction sequences beginning with other vertices except for $v_7$. 
\end{example}

\begin{example}\label{ex_BIP}
Some retraction sequences for $3$-dimensional Bruhat interval polytopes \cite{TW} are illustrated in \cite[Figures 25, 27]{LM}, which are not simple polytopes. 
\end{example}

\begin{remark}\label{rmk_non-ret-polytope}
Not every convex polytope 
has a retraction sequence, for instance the convex hull of 
$\{\pm (1,0,0), \pm(0,1,0), \pm(0,0,1)\}$. It is a $3$-dimensional 
convex polytope with $6$ vertices and each of the vertices does not have 
any neighborhood homeomorphic to $\RR^3_\geq$. 
\end{remark}

In terms of toric varieties, above examples shows that the category of 
toric varieties over almost simple polytopes is strictly larger than 
the category of all simplicial toric varieties. 

\subsection{Torus-equivariant stratifications}
\label{subsec_torus_eq_stratification}
From now on, we consider a toric variety $X$ whose orbit space, 
via the orbit map  $\eta \colon X \to P$ defined in \eqref{eq_orbit_map},
is an almost simple polytope $P$.  Such a toric variety have the following 
property, which is one of the main observations in this paper. 

\begin{theorem}\label{thm:q-cw_srtucture}
A retraction sequence 
$(P_1 , Q_1, v_1) \to \cdots \to (P_\ell , Q_\ell, v_\ell)$ of $P$ yields a $T^n$-equivariant stratification of $X$ 
\begin{equation*}
X_1 \subseteq X_2 \subseteq \cdots \subseteq X_{\ell}=X
\end{equation*}
such that the quotient $X_{j}/X_{j-1}$ 
is homeomorphic to the Thom space $\mathrm{Th}(\xi^{A_j})$
of the simple orbifold $T^n$-bundle 
\begin{equation}\label{eq_orb_vector_bdl_over_pt}
\xi^{A_j} \colon \CC^{k_j}/A_j \to \eta^{-1}(v_{\ell-j+1}),
\end{equation}
for some $k_j \in \NN$ and finite abelian group $A_j$, where $v_{i}\in V(P)$ denotes the
free vertex of $P_{i}$ to define $P_{i+1}$, for $i =1, \dots, \ell-1$. 
\end{theorem}
\begin{proof}
We define $X_j:=\pi^{-1}(P_{\ell-{j}+1})$ for $j=1, \dots, \ell$. 
Then, $X_{j+1}\subset X_{j}$ as $P_{j} \supset P_{j+1}$, see the  diagram
\begin{equation}\label{eq_T_equiv_stratification}
\begin{tikzcd}[column sep=tiny]
X_1 \subseteq  \cdots \subseteq ~~ X_{j-1}~~ \subseteq ~~ X_j ~~ \subseteq  ~~X_{j+1}~~ \subseteq  \cdots \subseteq  X_\ell \arrow{d}{\pi} \arrow[shift left=9ex]{d}{\pi}\arrow[shift right=9ex]{d}{\pi}\arrow[shift left=22ex]{d}{\pi}\arrow[shift right=22ex]{d}{\pi}\\
P_\ell \subseteq  \cdots \subseteq P_{\ell-j+2} \subseteq P_{\ell-j+1} \subseteq  P_{\ell-j} \subseteq  \cdots \subseteq  P_1. 
\end{tikzcd}
\end{equation}
Since $\pi$ is the orbit map with respect to $T^n$-action, \eqref{eq_T_equiv_stratification} is $T^n$-equivariant. 

To prove the second assertion, we consider the unique maximal face $Q_{\ell-j+1}$ of $P_{\ell-j+1}$
which contains the free vertex $v_{\ell-j+1}$ and denote by $U_{\ell-j+1}\subset Q_{\ell-j+1}$ the union of all relative interiors of faces in $Q_{\ell-j+1}$ containing $v_{\ell-j+1}$. For instance, the colored 
faces in Figure \ref{fig_ret_seq_of_GZ_polytope} are $U_{\ell-j+1}$ for $\ell=7$ and $2\leq j\leq 7$. 
Then, one can see from the property of a free vertex that $U_{\ell-j+1}$ is 
homeomorphic to $\RR^{k_j}_\geq$ as manifolds with corners, where $k_j:=\dim U_{\ell-j+1}=\dim Q_{\ell-j+1}$. Also, we note that $$X_{j}-X_{j-1}=\eta^{-1}(U_{\ell-j+1}).$$

Let $\RR(Q_{\ell-j+1})$ be the subspace of $\RR^n$ 
generated by the normal vectors of facets of $P$ intersecting $Q_{\ell-j+1}$. Notice that $\RR(Q_{\ell-k_j+1})$ is of dimension $n-k_j$ as $Q_{\ell-j+1}$ is $k_j$-dimensional face of $P$. 
Since $v_{\ell-j+1}$ is a free vertex of $P_{\ell-j+1}$, there are $k_j$-many facets, say 
$F_1, \dots, F_{k_j}$, such that $v_{\ell-j+1} = \bigcap_{i=1}^{k_j}(Q_{\ell-j+1} \cap F_i)$. 
Consider the projection 
\begin{equation}\label{eq_ground_lattice}
\ZZ^n \to \ZZ^n/(\ZZ^n \cap  \RR(Q_{\ell-j+1}))\cong \ZZ^{k_j}
\end{equation}
and the images $\mu_1, \dots, \mu_{k_j}$ of primitive outward normal vectors 
of $F_1, \dots, F_{k_j}$ via \eqref{eq_ground_lattice}, respectively. 


Now, the result of \cite[Proposition 4.4]{BNSS} shows  
\begin{equation}\label{eq_difference_of_strata_q-disc}
 \pi^{-1}(U_{\ell-j+1}) \cong D^{2k_j}/ A_j,
\end{equation}
where 
\begin{equation}\label{eq_finite_gp_in_filtration}
A_j=\ker (\exp \left[ \mu_{1} \mid \dots \mid \mu_{k_j}\right] 
\colon T^{k_j} \to T^{k_j}).
\end{equation}

Here, one can regard the space $\eqref{eq_difference_of_strata_q-disc}$ 
as a $\mathbf{q}$-disc bundle of a simple orbifold $T^{k_j}$-bundle 
\begin{equation}\label{eq_T_bundle}
\xi^{A_j} \colon \CC^{k_{j}} /A_j \to \pi^{-1}(v_{\ell-j+1}),
\end{equation}
where the standard $T^{k_{j}}$-action on $\CC^{k_{j}}$ 
induces an action on $\CC^{k_{j}}/A_j$ and $T^{k_{j}}$ acts 
on the fixed point $\pi^{-1}(v_{\ell-j+1})$ trivially. Note that $T^n$ 
acts on this bundle via the projection of $T^n \to T^{k_j}$ 
determined by \eqref{eq_ground_lattice}. 
Hence we have $T^n$-equivariant homeomorphisms 
$$X_{j}/X_{j-1}\cong \pi^{-1}(Q_{\ell-j+1})/ \pi^{-1}(Q_{\ell-j+1} \cap P_{\ell-j+2}) \cong \mathrm{Th}(\xi^{A_j}).
$$
\end{proof}

We refer to \cite[Proposition 4.4]{BNSS} for a relevant interpretation of a retraction sequence from the viewpoint of  $\mathbf{q}$-CW complexes.

Next corollary extends the result of \cite[Theorem 12.3.11]{CLS} from the category of 
simplicial toric varieties to the category of toric varieties over almost simple polytopes. 
\begin{corollary}\label{cor_tor_free_odd_van}
The ordinary cohomology with rational coefficients of a toric variety $X$ over an almost simple polytope vanishes in odd degrees, i.e., $H^{2k+1}(X;\QQ)=0$ for all $k$. 
\end{corollary}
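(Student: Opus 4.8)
The plan is to exploit the $T^n$-equivariant stratification established in Theorem \ref{thm:q-cw_srtucture} and pass to ordinary (non-equivariant) rational cohomology via the cofibrations it produces. First I would observe that the retraction sequence gives a filtration $X_1 \subseteq X_2 \subseteq \cdots \subseteq X_\ell = X$ in which each successive quotient $X_j/X_{j-1}$ is homeomorphic to the Thom space $Th(\xi^{K_j})$ of the simple orbifold bundle $\xi^{K_j}\colon \CC^{k_j}/K_j \to \eta^{-1}(v_j)$. Since the base $\eta^{-1}(v_j)$ is a single fixed point, this Thom space is the Thom space of a $2k_j$-dimensional $\mathbf{q}$-disc, namely $D^{2k_j}/K_j$ modulo its boundary $\mathbf{q}$-sphere.

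The key computational input is the non-equivariant Thom isomorphism for these orbifold bundles. I would invoke the non-equivariant analogue of the Thom isomorphism Proposition stated above (for which the paper cites \cite[Section 5]{PS}): because $K_j$ is a finite abelian group acting linearly on $\CC^{k_j}$ preserving the complex orientation, taking the quotient by $K_j$ does not change rational cohomology in the relevant sense, and $\tilde H^*(Th(\xi^{K_j});\QQ)$ is concentrated in a single even degree $2k_j$, being a rank-one $\QQ$-module there. Concretely, $\tilde H^*(D^{2k_j}/K_j, S^{2k_j-1}/K_j;\QQ)$ vanishes in odd degrees; this is the crucial fact that survives the passage from the equivariant to the non-equivariant setting, and it holds precisely because we work over $\QQ$, where the finite group $K_j$ becomes invisible to reduced cohomology.

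The main argument is then an induction on $j$ using the long exact sequences of the cofibrations $X_{j-1}\to X_j \to Th(\xi^{K_j})$. The inductive hypothesis is that $H^{2k+1}(X_{j-1};\QQ)=0$ for all $k$. The base case $X_1=\eta^{-1}(v_1)$ is a single point, whose odd cohomology vanishes trivially. For the inductive step I would write out the long exact sequence in reduced cohomology
\begin{equation*}
\cdots \to \tilde H^{q}(Th(\xi^{K_j});\QQ) \to H^{q}(X_j;\QQ) \to H^{q}(X_{j-1};\QQ) \to \tilde H^{q+1}(Th(\xi^{K_j});\QQ) \to \cdots
\end{equation*}
and restrict to odd $q=2k+1$. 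The outer terms $\tilde H^{2k+1}(Th(\xi^{K_j});\QQ)$ and $\tilde H^{2k+2}(Th(\xi^{K_j});\QQ)$ both vanish (the Thom space has cohomology only in the single even degree $2k_j$, so in particular nothing in adjacent odd degrees surrounding it, and more relevantly the boundary maps land in even-degree groups), while $H^{2k+1}(X_{j-1};\QQ)=0$ by the inductive hypothesis. Exactness then forces $H^{2k+1}(X_j;\QQ)=0$, completing the induction at $j=\ell$ to give the claim for $X$.

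The step I expect to require the most care is the vanishing of the Thom space cohomology in odd degrees, i.e. verifying that the orbifold Thom isomorphism genuinely concentrates $\tilde H^*(Th(\xi^{K_j});\QQ)$ in the single even degree $2k_j$. This rests on the finite quotient $D^{2k_j}/K_j$ behaving rationally like the disc itself; one must ensure that taking $K_j$-invariants (equivalently, that transfer arguments over $\QQ$) does not introduce spurious odd classes, and that the relative pair $(D^{2k_j}/K_j, S^{2k_j-1}/K_j)$ has the expected rational cohomology of a $2k_j$-sphere. Once this even-concentration is in hand, the long exact sequence bookkeeping is routine, since both the incoming Thom-space terms and the inductively-controlled terms vanish in the degrees of interest.
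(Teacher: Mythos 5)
Your argument is correct and takes essentially the same route as the paper: the paper notes that the identifications $\eta^{-1}(U_j)\cong D^{2k_j}/K_j$ realize a building sequence in the sense of \cite[Definition 2.4]{BNSS} with even-dimensional $\mathbf{q}$-cells and then invokes \cite[Theorem 1.1]{BNSS}, whose proof is precisely your induction over the cofibrations $X_{j-1}\to X_j\to Th(\xi^{K_j})$. Your write-up simply inlines that citation, supplying the transfer argument that $\tilde H^*\bigl(D^{2k_j}/K_j, S^{2k_j-1}/K_j;\QQ\bigr)$ is concentrated in the single even degree $2k_j$ (valid since the finite group $K_j$ sits in the connected torus, so it acts trivially on rational cohomology), and correctly pinpoints rational coefficients as the step where finiteness of $K_j$ is used.
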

\begin{proof}
The identifications \eqref{eq_difference_of_strata_q-disc} for each $j$ realizes a \emph{building sequence} defined in \cite[Definition 2.4]{BNSS} of $X$. Since each \eqref{eq_difference_of_strata_q-disc} is even dimensional, the result directly follows from \cite[Theorem 1.1]{BNSS}.
\end{proof}

\begin{proposition}
The $T^n$-equivariant stratification in \eqref{eq_T_equiv_stratification} 
satisfies assumptions {\rm (\textbf{A}1), (\textbf{A}2)} and {\rm (\textbf{A}3)} in Section \ref{sec:GenEqCohom}.  
\end{proposition}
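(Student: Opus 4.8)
The plan is to verify each of the three assumptions (A1), (A2), (A3) directly against the simple orbifold $T^n$-bundles produced by Theorem \ref{thm:q-cw_srtucture}. Recall that for each free vertex $v_j$ we obtained the bundle $\xi^{K_j} \colon \CC^{k_j}/K_j \to \eta^{-1}(v_j)$ over a single fixed point, where the standard torus $T^{k_j}$ acts coordinatewise on $\CC^{k_j}$ and $K_j = \ker(\exp[\mu_1 \mid \dots \mid \mu_{k_j}])$ is a finite abelian group. The key observation is that $\CC^{k_j}$ splits $T^{k_j}$-equivariantly as a sum of one-dimensional representations $\bigoplus_{i=1}^{k_j} \CC_{(i)}$, one for each coordinate axis, and this splitting is preserved by the $K_j$-action since $K_j \subseteq T^{k_j}$ acts through the same diagonal torus.

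First I would establish (A1). The coordinate decomposition $\CC^{k_j}/K_j$ should be exhibited as a Whitney sum $\bigoplus_{s < j} \xi^{K_{js}}$ of simple orbifold line-bundle summands, where each summand corresponds to a facet $F_i$ meeting $v_j$ and hence to a face of $P_j$ of codimension one through $v_j$; these faces index the strata $B_s = \eta^{-1}(v_s)$ appearing earlier in the retraction sequence (this is exactly the combinatorial bookkeeping that matches the index $s < j$). Each summand is $E$-orientable because it is a complex line, and the Thom isomorphism Proposition of Section \ref{sec:GenEqCohom} guarantees orientability of the whole bundle. For (A2), I would trace the attaching map $\phi_j \colon S(\CC^{k_j}/K_j) \to X_{j-1}$ through the identification \eqref{eq_difference_of_strata_q-disc}: restricting $\phi_j$ to the sphere of a single summand $S(V_{js}/K_{js})$ lands in the stratum $\eta^{-1}(P_{j-1})$ over the neighbouring vertex $v_s$, and I would check that this restriction factors as $f_{js} \circ \xi^{K_{js}}$ for the evident $T^n$-map $f_{js} \colon \eta^{-1}(v_j) \to \eta^{-1}(v_s)$ between fixed points. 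Since both bases are single $T^n$-fixed points, $f_{js}$ is essentially forced, and the factorization reduces to checking compatibility of the gluing with the torus action, which follows from the explicit form of the equivalence relation $\sim$ defining $X_\Sigma$.

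The substance of the proof lies in (A3), so this is where I expect the main obstacle. I need to show the equivariant Euler classes $e_{T^n}(\xi^{K_{js}})$ are non-zero-divisors and pairwise relatively prime in $E^*_{T^n}(\eta^{-1}(v_s))$. Since each base is a point, $E^*_{T^n}(\mathrm{pt})$ is the coefficient ring $E^*_{T^n}$, and the Euler class of the line summand is determined by the weight $\mu_i$ (projected via \eqref{eq_ground_lattice}) together with the finite isotropy from $K_j$. In Borel cohomology $H^*_{T^n}(\mathrm{pt};\QQ) \cong \QQ[\RR^n]$ this Euler class is (a rational multiple of) the linear form $\mu_i$, and distinct facets through $v_j$ give linearly independent normal vectors $\mu_1, \dots, \mu_{k_j}$ precisely because $v_j$ is a free vertex cut out by $k_j$ facets whose normals span $\RR^{k_j}$; linearly independent linear forms in a polynomial ring over $\QQ$ are non-zero-divisors and pairwise coprime. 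The hard part will be upgrading this from ordinary cohomology to $K_{T^n}^*$ and $MU_{T^n}^*$ uniformly: there the Euler class is no longer a linear form but a power series (respectively, an element of the Lazard-type coefficient ring) built from the weight via the formal group law, and the finite quotient by $K_j$ multiplies the relevant weights by the orders dividing $K_j$. I would argue that after tensoring with $\QQ$ the relevant Euler classes remain non-zero-divisors, and that pairwise coprimality persists because the leading (linear) terms are the independent forms $\mu_i$; concretely I would reduce coprimality of two Euler classes to the coprimality of their images in the associated graded or to the fact that their zero loci in $\mathrm{Spec}\,(E^*_{T^n} \otimes \QQ)$ meet only in the augmentation ideal, exactly as in the weighted projective space computations of \cite{HHRW}. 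This is the step requiring genuine care, and I would lean on the divisibility and orientation properties recorded in the Thom isomorphism Proposition to keep the argument uniform across the three theories.
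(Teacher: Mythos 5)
Your proposal is correct and takes essentially the same approach as the paper: split $\CC^{k_j}$ into $K_j$-invariant one-dimensional $T^n$-representations for (A1), observe that the restriction of $\phi_j$ to each sub-sphere collapses to the fixed point $\eta^{-1}(v_s)$ at the far end of the corresponding edge of $U_j$ for (A2) (so your indexing of the summands by facets should really be by the edges through $v_j$, which is the picture your (A2) actually uses), and derive (A3) from the linear independence of the projected normals $\mu_1, \dots, \mu_{k_j}$. The only divergence is that the paper settles (A3) with a one-line appeal to \cite[Lemma 5.2]{HHH}, whereas you sketch a direct proof of that lemma's content over $\QQ$ (linear forms for $H^\ast_{T^n}$, formal-group-law leading terms for $K^\ast_{T^n}$ and $MU^\ast_{T^n}$); your sketch is sound and matches the cited lemma, so nothing essential is missing.
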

\begin{proof}
Recall that $\pi^{-1}(v_{\ell-j+1})$ in \eqref{eq_orb_vector_bdl_over_pt} is a fixed point.
Observe that the total space of \eqref{eq_T_bundle} is a quotient of a 
$T^n$-representation on $\CC^{k_j}$ by a finite subgroup $A_j$ of $T^n$.
Therefore, $\CC^{k_j}$ can be decomposed into 1-dimensional representations
as $$ \CC^{k_j} \cong \CC(\alpha_1) \oplus \cdots \oplus \CC(\alpha_{k_j})$$ for some
characters $\alpha_s \colon T^n \to S^1$. Since each $\CC(\alpha_s)$ is 
invariant under $A_j$, we have
$$(\xi^{A_j} \colon \CC^{k_j}/A_j \to \pi^{-1}(v_{\ell-j+1})) \cong \bigoplus_{s=1}^{k_j} \big( \xi^{A_{j,s}} \colon \CC(\alpha_s)/A_{j,s} \to \pi^{-1}(v_{\ell-j+1})\big)$$
for some finite groups $A_{j,1}, \ldots, A_{j,k_j}$. This proves assumption (\textbf{A}1).

The quotient of the 1-dimensional representation $\CC(\alpha_s)$ by 
$T^n$-action is identical to $\RR_{\geq 0}$
which corresponds to an edge, say $e_s$, of $U_{\ell-j+1}$. Indeed,  since $\pi^{-1}(U_{\ell-j+1}) \to \pi^{-1}(v_{\ell-j+1})$ is the $\mathbf{q}$-disc bundle associated with $\xi^{A_j}\colon \CC^{k_j}/A_j \to \pi^{-1}(v_{\ell-j+1})$, one can see that two projections  
$\CC(\alpha_s)/A_{j,s} \to \pi^{-1}(v_{\ell-j+1})$ and $\pi^{-1}(e_s) \to \pi^{-1}(v_{\ell-j+1})$ are identical. Note that one can write the attaching map
$\phi_j$ explicitly by the proof of \cite[Theorem 4.1]{BSS}.
Therefore, the image of $\phi_j|_{S(\CC(\alpha_s))}$ is a vertex $v_{s}$ of $e_s$ which is opposite to $v_{\ell-j+1}$. A pictorial explanation is given in
Figure \ref{fig_attaching_map}. Considering 
$$f_{j,s} \colon \pi^{-1}(v_{\ell-j+1}) \to \pi^{-1}(v_s)$$ 
as a map between two fixed points, we conclude Assumption (\textbf{A}2). 
\begin{figure}
\begin{tikzpicture}[scale=0.8]
\draw[fill=green!20, thick] (0,0)--(3,0)--(4,0.7)--(3.5,2)--(0.5,2)--cycle;
\draw[thick] (3.5,2)--(3,0);
\draw[dotted, thick] (0,0)--(1,0.7)--(4,0.7);
\draw[dotted, thick] (1,0.7)--(0.5,2);

\node[fill, circle, inner sep=1pt] at (3.5,2) {};
\node[above] at (3.5,2) {\scriptsize$v_{\ell-j+1}$};

\node at (5, 1) {$\supset$};

\begin{scope}[xshift=180]
\draw[fill=green!20, thick] (0,0)--(1,.7)--(0.5,2)--cycle;
\draw[fill=green!20, thick] (0,0)--(3,0)--(4,.7)--(1,0.7)--cycle;
\draw[dashed, thick, ->] (3.5, 2)--(0.6,2);
\draw[dashed, thick, ->] (3.5, 2)--(3,0.1);
\draw[dashed, thick, ->] (3.5, 2)--(4,.8);

\draw[dotted, thick] (0.6,2)--(0.1, 0.1)--(3,0.1)--(4,0.8)--(1.1,0.8)--(0.6,2)--cycle;
\draw[dotted, thick] (0.1, 0.1)--(1.1,0.8); 

\node[fill, circle, inner sep=1pt] at (0.5,2) {};
\node[fill, circle, inner sep=1pt] at (3.5,2) {};
\node[fill, circle, inner sep=1pt] at (3,0) {};
\node[fill, circle, inner sep=1pt] at (4,0.7) {};

\node[above] at (3.5,2) {\scriptsize$v_{\ell-j+1}$};
\node[above] at (0.5,2) {\scriptsize$v_{1}$};
\node[below] at (3,0) {\scriptsize$v_{2}$};
\node[right] at (4,0.7) {\scriptsize$v_{3}$};

\node[above] at (2,2) {\scriptsize$f_{j,1}$};
\node[left] at (3.3,1.3) {\scriptsize$f_{j,2}$};
\node[right] at (3.8,1.5) {\scriptsize$f_{j,3}$};
\end{scope}
\end{tikzpicture}
\caption{An attaching map.}
\label{fig_attaching_map}
\end{figure}

Assumption (\textbf{A}3) follows from \cite[Lemma 5.2]{HHH}, as the vectors $\mu_1, \ldots, \mu_{k_j}$ defined by \eqref{eq_ground_lattice} are linearly independent.
\end{proof}

The following is an application of Proposition \ref{prop_image}
to the category of toric varieties over almost simple polytopes. 
\begin{proposition}\label{prop_GKM-description}
Let $X$ be a toric variety over an almost simple polytope with an orbifold $G$-equivariant stratification as in Theorem \ref{thm:q-cw_srtucture}. Let $E^\ast_{T^n}$ be a generalized $T^n$-equivariant cohomology theory discussed in Section \ref{sec:GenEqCohom}. Then, 
\begin{equation*}
E^*_{T^n}(X) = \Big \{(x_i) \in \prod_{1\leq i\leq \ell } E^*_{T^n}(pt) ~ \Big{|} ~ e_{T^n}(\xi^{A_{j,s}})~|~ x_{\ell-j+1} - f^*_{j,s}(x_{s}) ~\text{\emph{for}}~ s < \ell-j+1  \Big\}.
\end{equation*}
\end{proposition}

We note that $H^\ast_{T^n}(pt)$ and $K^\ast_{T^n}(pt)$ are 
isomorphic to the ring of polynomials and the ring of Laurant polynomials with 
$n$-variables, respectively. For $MU_{T^n}^\ast(pt)$, though its structure is unknown, 
it is referred as the ring of \emph{$T^n$-cobordism forms} in \cite{HHRW}. 

\section{Piecewise algebras and Applications}\label{sec:PPandAPPLICATIONS}
We begin this section with a summary of the concept of some piecewise algebras 
associated to a fan, studied in \cite[Section 4]{HHRW}. The authors apply those 
algebras to weighted projective spaces to get a description of generalized 
equivariant cohomology theories. Here, we generalize their several results to a wider class of singular toric varieties discussed in Section \ref{sec:toric_variety}. 

Recall that if $\sigma$ is a cone in a fan $\Sigma$, then all of the
faces of $\sigma$ belong to $\Sigma$. 
This leads us to form a small category $\ts{cat}(\Sigma)$ whose objects 
are elements of $\Sigma$ and morphisms are face inclusions. The zero cone $\{0\}$
is the initial object of this category. 

Let $\Sigma$ be an $n$-dimensional rational fan in $\RR^n$, 
namely, one-dimensional cones are generated by rational vectors in $\RR^n$.  
Here we may identify $\RR^n$ with the Lie algebra of $T^n$. Given a
$k(\leq n)$-dimensional cone $\sigma\in \Sigma$, we consider a 
subtorus $T_\sigma$ generated by primitive vectors spanning 
$1$-dimensional cones in $\sigma$.
For the category $T^n$-\ts{top} of $T^n$-spaces, we define a diagram 
\begin{equation}\label{eq_V-Diagram}
\mathcal{V}\colon \ts{cat}(\Sigma) \to T^n\ts{-top}
\end{equation}
by $\mathcal{V}(\sigma):=T^n/T_\sigma$ 
and $\mathcal{V}(\sigma \subseteq \tau)=(T^n/T_\sigma \twoheadrightarrow T^n/T_\tau)$, 
where the projection $T^n/T_\sigma \twoheadrightarrow T^n/T_\tau$ is induced from 
the natural inclusion $T_\sigma \subseteq T_\tau$. Then, 
the toric variety $X_\Sigma$ associated to $\Sigma$ is homotopy equivalent to 
the homotopy colimit ${\rm hocolim}\mathcal{V}$ of \eqref{eq_V-Diagram}, 
see for instance \cite[Section 4]{HHRW} as well as \cite{Fr, WZZ}. 
Next, regarding $E_{T^n}^\ast$ as a functor from $T^n\ts{-top}$ to the 
category $\ts{gcalg}_E$ of graded commutative $E_{T^n}^\ast$-algebras, 
we consider the composition 
\begin{equation}\label{eq_evaluation_map}
\mathcal{EV}\colon \ts{cat}(\Sigma) \xrightarrow{\mathcal{V}} T^n\ts{-top} \xrightarrow{E^\ast_{T^n}} \ts{gcalg}_{E},
\end{equation}
which leads us to the following definition.
\begin{definition}\cite[Definition 4.6]{HHRW}\label{piec_alg}
Let $\Sigma$ be an $n$-dimensional  rational fan in $\RR^n$. We call $\lim \mathcal{EV}$
the \emph{piecewise algebra} over $E_{T^n}^\ast$. 
\end{definition}

We note that the object $\mathcal{EV}(\sigma)$ can be calculated explicitly as follows. 
The natural action of $T^n$ on $\mathcal{V}(\sigma)=T^n/T_\sigma$ yields 
a $T^n$-representation $\eta_\sigma$ on which $T_\sigma$ acts trivially. 
Since $T^n$ is abelian, $\eta_\sigma$ can be decomposed into 1-dimensional 
representations, say $\eta_\sigma \cong \bigoplus_{i=1}^{n-k}\eta_\sigma(i)$, where $k=\dim T_\sigma$.  
We denote by $S^1_{\eta_{\sigma}(i)}$ the corresponding circle for each $i=1, \dots, n-k$. 
The inclusion of $S^1_{\eta_{\sigma}(i)}$ into the unit disc $D_{\eta_{\sigma}(i)}$
gives an equivariant cofiber sequence 
\begin{equation}\label{eq_cofib_1_dimen}
S^1_{\eta_{\sigma}(i)} \hookrightarrow D_{\eta_{\sigma}(i)} \to D_{\eta_{\sigma}(i)}/ S^1_{\eta_{\sigma}(i)}.
\end{equation}
Regarding each term of \eqref{eq_cofib_1_dimen} as an $S^1$-bundle, disc bundle over a point 
and the associated Thom space,  respectively, we may consider the equivariant Euler class 
$e_{T^n}(\eta_\sigma(i)) \in E^*_{T^n}$ for each $i=1, \dots, n-k$. 
%
%
\begin{proposition}\cite[Section 4, (4.11)]{HHRW}
$$\mathcal{EV}(\sigma) \cong E_{T^n}^\ast(T^n/T_\sigma) \cong E^\ast_{T^n}(pt)/(e_{T^n}\big(\eta_\sigma(1)), \dots, e_{T^n}(\eta_\sigma(n-k))\big).$$
\end{proposition}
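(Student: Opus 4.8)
The plan is to establish the second isomorphism, since the first one, $\mathcal{EV}(\sigma) \cong E_{T^n}^\ast(T^n/T_\sigma)$, is immediate from the definition $\mathcal{EV} = E_{T^n}^\ast \circ \mathcal{V}$ together with $\mathcal{V}(\sigma) = T^n/T_\sigma$ in \eqref{eq_V-Diagram}. First I would record the geometric input, namely that $T^n/T_\sigma$ splits, as a $T^n$-space, into the product of the unit circles of its one-dimensional summands:
\[
T^n/T_\sigma \;\cong\; \prod_{i=1}^{n-k} S^1_{\eta_\sigma(i)}.
\]
Indeed, the characters $\eta_\sigma(1), \dots, \eta_\sigma(n-k)$ are exactly the characters of $T^n$ which are trivial on $T_\sigma$, so the map $g \mapsto (\eta_\sigma(1)(g), \dots, \eta_\sigma(n-k)(g))$ descends to a $T^n$-equivariant homeomorphism of $T^n/T_\sigma$ onto this product, where $T^n$ acts on the $i$-th factor through $\eta_\sigma(i)$. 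Each factor is precisely the circle $S^1_{\eta_\sigma(i)}$ appearing in \eqref{eq_cofib_1_dimen}.

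Next I would set up an induction along this product. Writing $Y_m := \prod_{i=1}^{m} S^1_{\eta_\sigma(i)}$, so that $Y_0 = pt$ and $Y_{n-k} = T^n/T_\sigma$, the product of the cofiber sequence \eqref{eq_cofib_1_dimen} for the index $m$ with $Y_{m-1}$ yields the $T^n$-equivariant cofiber sequence
\[
Y_m \hookrightarrow Y_{m-1}\times D_{\eta_\sigma(m)} \to Th\big(\eta_\sigma(m)\to Y_{m-1}\big),
\]
whose total space $Y_{m-1}\times D_{\eta_\sigma(m)}$ is $T^n$-equivariantly homotopy equivalent to $Y_{m-1}$. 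Applying $E_{T^n}^\ast$ together with the equivariant Thom isomorphism established above identifies $E_{T^n}^\ast(Th(\eta_\sigma(m)\to Y_{m-1}))$ with a free rank-one module over $E_{T^n}^\ast(Y_{m-1})$ in such a way that the map $E_{T^n}^\ast(Th(\cdots)) \to E_{T^n}^\ast(Y_{m-1}\times D_{\eta_\sigma(m)}) \cong E_{T^n}^\ast(Y_{m-1})$ induced by the collapse becomes multiplication by the Euler class $e_{T^n}(\eta_\sigma(m))$.

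The crux is then the regularity of the Euler classes. I claim that at each stage the image of $e_{T^n}(\eta_\sigma(m))$ in $E_{T^n}^\ast(Y_{m-1})$ is not a zero divisor. Granting this, multiplication by $e_{T^n}(\eta_\sigma(m))$ is injective, so the long exact sequence of the cofibration collapses to the short exact sequence (with the evident Thom degree shift)
\[
0 \to E_{T^n}^\ast(Y_{m-1}) \xrightarrow{\;\cdot\, e_{T^n}(\eta_\sigma(m))\;} E_{T^n}^\ast(Y_{m-1}) \to E_{T^n}^\ast(Y_m) \to 0,
\]
giving $E_{T^n}^\ast(Y_m) \cong E_{T^n}^\ast(Y_{m-1})/\big(e_{T^n}(\eta_\sigma(m))\big)$. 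Iterating from $Y_0 = pt$ up to $Y_{n-k} = T^n/T_\sigma$ then produces the asserted quotient presentation of $E_{T^n}^\ast(T^n/T_\sigma)$.

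The non-zero-divisor statement is the main obstacle, and the place where the hypotheses enter; it amounts to asserting that $e_{T^n}(\eta_\sigma(1)), \dots, e_{T^n}(\eta_\sigma(n-k))$ form a regular sequence in $E_{T^n}^\ast(pt)$. For $H^\ast_{T^n}$ and $K^\ast_{T^n}$ this is transparent, since $E_{T^n}^\ast(pt)$ is a (Laurent) polynomial ring and the $\eta_\sigma(i)$ span an $(n-k)$-dimensional space of characters, so their Euler classes are part of a regular system of parameters. For $MU^\ast_{T^n}$, whose coefficient ring is not explicit, I would instead invoke \cite[Lemma 5.2]{HHH} — the same input used to verify assumption {\rm (A3)} — which guarantees over $\QQ$ that the Euler classes of a linearly independent family of characters are non-zero divisors. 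The linear independence of $\eta_\sigma(1), \dots, \eta_\sigma(n-k)$, which follows from the fact that they annihilate exactly $T_\sigma$, is what feeds this lemma and also what makes the final quotient have the correct size.
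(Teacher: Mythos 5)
The paper offers no independent proof of this proposition---it is quoted directly from \cite[(4.11)]{HHRW}---and your route (split $T^n/T_\sigma$ equivariantly into the circles $S^1_{\eta_\sigma(i)}$, iterate the cofibration \eqref{eq_cofib_1_dimen} crossed with the partial products, use the Thom isomorphism to turn the first map of each long exact sequence into multiplication by the Euler class, and let regularity collapse everything to short exact sequences) is exactly the computation behind the cited formula, so the architecture is right. One point of looseness in the setup: the $\eta_\sigma(i)$ are not ``exactly the characters of $T^n$ trivial on $T_\sigma$'' (there are infinitely many such characters); they are a $\ZZ$-basis of the annihilator sublattice of $T_\sigma$ in $\mathrm{Hom}(T^n, S^1)$. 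Since $T_\sigma$ is connected, this sublattice is saturated, and that is what makes your map to $\prod_i S^1_{\eta_\sigma(i)}$ a homeomorphism rather than a finite covering; it is also what lets $\eta_\sigma(1), \dots, \eta_\sigma(n-k)$ extend to a $\ZZ$-basis of the full character lattice, which your $K$-theory regularity argument silently needs (linear independence over $\QQ$ alone would only give a monomial change of variables after inverting integers).

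The genuine gap is the regularity step for $MU^\ast_{T^n}$. Your induction requires that the image of $e_{T^n}(\eta_\sigma(m))$ be a non-zero divisor in $E^\ast_{T^n}(Y_{m-1}) \cong E^\ast_{T^n}(pt)/\big(e_{T^n}(\eta_\sigma(1)), \dots, e_{T^n}(\eta_\sigma(m-1))\big)$, i.e., that the Euler classes form a regular sequence; but \cite[Lemma 5.2]{HHH} only asserts that Euler classes of linearly independent characters are individually non-zero divisors in $E^\ast_{T^n}(pt)$ and pairwise relatively prime. Individual regularity plus pairwise relative primeness does not imply regularity of the sequence in successive quotients of a general graded ring, and since $MU^\ast_{T^n}(pt)$ is not explicitly known you cannot fall back on the hands-on argument you gave for the polynomial and Laurent cases. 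The standard repair keeps your induction intact: each partial product is itself a torus orbit, $Y_{m-1} \cong T^n/T_{(m-1)}$ where $T_{(m-1)} = \bigcap_{i < m} \ker \eta_\sigma(i)$ is a subtorus (connectedness again uses that the $\eta_\sigma(i)$ are part of a lattice basis), so the change-of-groups isomorphism gives $E^\ast_{T^n}(Y_{m-1}) \cong E^\ast_{T_{(m-1)}}(pt)$, under which the image of $e_{T^n}(\eta_\sigma(m))$ is the Euler class of the restricted character $\eta_\sigma(m)|_{T_{(m-1)}}$. That restriction is a nontrivial (indeed primitive) character of the smaller torus, so the non-zero-divisor lemma applies afresh at every stage, each long exact sequence collapses as you claimed, and the asserted quotient presentation follows.
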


The proof of the following theorem is almost same as the proof of \cite[Theorem 5.5]{HHRW}
with very few modifications in notation. To be more precise, one needs to replace the
equivariant Euler classes \cite[(3.8)]{HHRW} corresponding to the filtration of a divisive weighted projective space by the equivariant Euler classes 
for simple orbifold bundles defined in Section \ref{sec:GenEqCohom}. 

\begin{theorem}\label{thm_main_E*_T=PP}
Let $X_P$ be a toric variety over an almost simple polytope $P$ and 
$\Sigma_P$ the normal fan of $P$. Let $E_{T^n}^\ast$ be a $T^n$-equivariant cohomology theory discussed in Section \ref{sec:GenEqCohom}. Then, $E_{T^n}^\ast(X_P)$ is isomorphic to the piecewise algebra $\lim \mathcal{EV}$, as $E_{T^n}(pt)$-algebras. 
\end{theorem}

\begin{remark}
Compact symplectic toric orbifolds are toric varieties with fans defined by their moment polytope which are simple, see \cite[Section 9]{LeTo}. Therefore they satisfy the hypotheses of Theorem \ref{thm:q-cw_srtucture} and Theorem \ref{thm_main_E*_T=PP}. 
\end{remark}



\section{Divisive toric varieties}\label{sec:divisive_toric}
In this section, we introduce the notion of {\it divisive toric varieties} motivated by divisive weighted projective spaces \cite{BFR}. They are singular toric varieties which may have singularities beyond orbifold singularities, whose generalized equivariant cohomologies can be obtained over integers. We follow the same arguments as we 
discussed in Section \ref{sec:PPandAPPLICATIONS}. Here, the cohomology theory $E^\ast_{T^n}$ in this section can also be $MU_{T^n}^\ast$ without taking tensor with $\QQ$.

\begin{definition}
Let $X$ be a toric variety satisfying the hypothesis of Theorem 
\ref{thm:q-cw_srtucture}. If the finite groups $A_j$'s in \eqref{eq_T_bundle}
are trivial, then we call $X$ a \emph{divisive toric variety}. 
\end{definition}

\begin{example}\label{ex_GZ_divisive}
Recall the $3$-dimensional Gelfand--Zetlin polytope $P$ described in 
Example \ref{ex_GZ_polytope}. The outward normal vectors of 
facets intersecting $v_1=(0,2,2)$ in $P_1$ of Figure \ref{fig_ret_seq_of_GZ_polytope}
are $(-1,0,0)$, $(0,-1,1)$ and $(0,1,0)$, which form an integral basis of $\ZZ^3$. 
See Figure \ref{fig_3-dim_GC} for the vertices and primitive outward normal vectors of $P$. 
Hence, the finite group $A_7$ defined in \eqref{eq_finite_gp_in_filtration} is
trivial. To compute $A_6$, we consider the facet given by $\{x=1\}$ whose 
primitive outward normal vector is $(1,0,0)$. In this case, the map 
\eqref{eq_ground_lattice} yields the projection $T^3\twoheadrightarrow T^2$
onto the last two coordinates. Hence, 
$A_6=\ker(\rho \colon T^2 \to T^2)$, where $\rho(t_1, t_2)=(t_1^{-1}t_2, t_1)$, 
which is trivial. We refer to \cite[Proposition 4.3]{BSS} for the general 
statement about this computation. 
Finally, one can conclude by similar computations for the other 
vertices $v_2, \dots, v_6$ that the associated toric variety $X_P$ is divisive. 
\end{example}
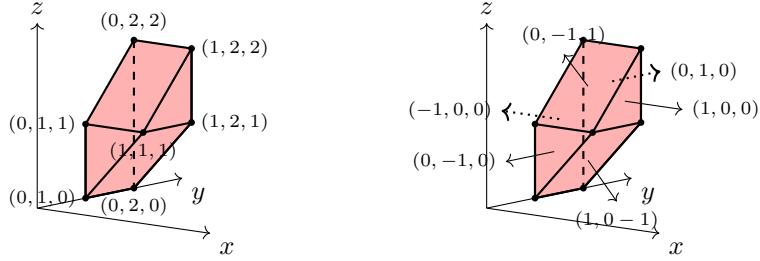
\begin{figure}
\tdplotsetmaincoords{80}{40}
\begin{tikzpicture}[tdplot_main_coords]
\draw[->] (0,0,0) -- (3,0,0) node[anchor=north west]{$x$};
\draw[->] (0,0,0) -- (0,3,0) node[anchor=north west]{$y$};
\draw[->] (0,0,0) -- (0,0,2.5) node[anchor=south]{$z$};
\draw[thick, fill=red!30] (0,1,0)--(0,2,0)--(1,2,1)--(1,2,2)--(0,2,2)--(0,1,1)--cycle;
\draw[thick, dashed] (0,2,2)--(0,2,0);
\draw[thick] (0,1,1)--(0,1,0)--(0,2,0)--(1,2,1)--(1,2,2);
\draw[thick] (1,1,1)--(0,1,0);
\draw[thick] (1,1,1)--(0,1,1);
\draw[thick] (1,1,1)--(1,2,1);
\draw[thick] (1,1,1)--(1,2,2);

\node[fill, circle, inner sep=0.9pt] at (0,2,2) {};
\node[fill, circle, inner sep=.9pt] at (0,1,0) {};
\node[fill, circle, inner sep=.9pt] at (0,1,1) {};
\node[fill, circle, inner sep=.9pt] at (1,1,1) {};
\node[fill, circle, inner sep=.9pt] at (1,2,1) {};
\node[fill, circle, inner sep=.9pt] at (1,2,2) {};
\node[fill, circle, inner sep=.9pt] at (0,2,0) {};

\node[above] at (0,2,2) {\scriptsize $(0,2,2)$};
\node[right] at (1,2,2) {\scriptsize $(1,2,2)$};
\node[right] at (1,2,1) {\scriptsize $(1,2,1)$};
\node[below] at (0,2,0) {\scriptsize $(0,2,0)$};
\node[left] at (0,1,0) {\scriptsize $(0,1,0)$};
\node[left] at (0,1,1) {\scriptsize $(0,1,1)$};
\node[below] at (1,1,1) {\scriptsize $(1,1,1)$};

\begin{scope}[xshift=170]
\draw[->] (0,0,0) -- (3,0,0) node[anchor=north west]{$x$};
\draw[->] (0,0,0) -- (0,3,0) node[anchor=north west]{$y$};
\draw[->] (0,0,0) -- (0,0,2.5) node[anchor=south]{$z$};
\draw[thick, fill=red!30] (0,1,0)--(0,2,0)--(1,2,1)--(1,2,2)--(0,2,2)--(0,1,1)--cycle;
\draw[thick, dashed] (0,2,2)--(0,2,0);
\draw[thick] (0,1,1)--(0,1,0)--(0,2,0)--(1,2,1)--(1,2,2);
\draw[thick] (1,1,1)--(0,1,0);
\draw[thick] (1,1,1)--(0,1,1);
\draw[thick] (1,1,1)--(1,2,1);
\draw[thick] (1,1,1)--(1,2,2);

\node[fill, circle, inner sep=0.9pt] at (0,2,2) {};
\node[fill, circle, inner sep=.9pt] at (0,1,0) {};
\node[fill, circle, inner sep=.9pt] at (0,1,1) {};
\node[fill, circle, inner sep=.9pt] at (1,1,1) {};
\node[fill, circle, inner sep=.9pt] at (1,2,1) {};
\node[fill, circle, inner sep=.9pt] at (1,2,2) {};
\node[fill, circle, inner sep=.9pt] at (0,2,0) {};

\draw[->,] (1,5/3, 4/3)--(2,5/3, 4/3); \node[right] at (2,5/3, 4/3) {\scriptsize $(1,0,0)$};
\draw[->] (1/2, 3/2, 3/2)--(1/2, 1, 2); \node[above] at (1/2, 1, 2) {\scriptsize $(0, -1,1)$};
\draw[->] (1/3, 1, 2/3)--(1/3, 0, 2/3); \node[left] at (1/3, 0, 2/3) {\scriptsize $(0,-1,0)$};
\draw[->] (1/2, 3/2, 1/2)--(1, 3/2, 0); \node[below] at (1, 3/2, 0) {\scriptsize $(1, 0 -1)$};
\draw[->,dotted, thick] (1/2, 2, 3/2)--(1/2, 3, 3/2); \node[right] at (1/2, 3, 3/2) {\scriptsize $(0,1,0)$};
\draw[->,dotted, thick] (0, 3/2, 1)--(-1, 3/2, 1); \node[left] at (-1, 3/2, 1) {\scriptsize $(-1, 0, 0)$};

\end{scope}
\end{tikzpicture}
\caption{A 3-dimensional Gelfand--Zetlin Polytope.}
\label{fig_3-dim_GC}
\end{figure}

The following proposition is straightforward from Corollary \ref{cor_tor_free_odd_van} and the definition of a divisive toric variety. 
\begin{proposition}
The ordinary cohomology with integer coefficients of a divisive toric variety $X$ is torsion free and vanishes in vanishes in odd degrees.
\end{proposition}

Note that \cite{HHRW} defines the piecewise algebras of a fan with integers. To be more precise, the map  $\mathcal{EV}$ in \eqref{eq_evaluation_map} is a composition of $\mathcal{V}$ and the $T^n$-equivariant cohomology theories without taking tensor with $\QQ$. In particular, we denote the associated piecewise algebras by 
\begin{itemize}
\item $PP[\Sigma]$, the algebra of piecewise polynomials if $E_{T^n}^\ast = H^\ast_{T^n}$;
\item $PL[\Sigma]$, the algebra of piecewise Laurant polynomials if $E_{T^n}^\ast = K^\ast_{T^n}$;
\item $PC[\Sigma]$, the algebra of piecewise $T^n$-cobordism forms if $E_{T^n}^\ast = MU^\ast_{T^n}$. 
\end{itemize}
If a toric variety is divisive, then it is 
equipped with a $T^n$-equivariant stratification in the sense of
\cite[Section 2]{HHH}. So one can apply their results to divisive 
toric varieties, which yields the following proposition over integers.

\begin{proposition}\label{prop_divisive_PP}
Let $X$ be a divisive toric variety over an almost simple polytope $P$ 
and $\Sigma_P$ the normal fan of $P$. Then, 
\begin{enumerate}
\item $H^\ast_{T^n}(X_P; \ZZ)$ is isomorphic to $PP[\Sigma_P]$ as an
 $H^\ast_{T^n}(pt; \ZZ)$-algebra;
\item $K^\ast_{T^n}(X_P; \ZZ)$ is isomorphic to $PL[\Sigma_P]$ as a
$K^\ast_{T^n}(pt; \ZZ)$-algebra;
\item $MU^\ast_{T^n}(X_P; \ZZ)$ is isomorphic to $PC[\Sigma_P]$ as an 
$MU^\ast_{T^n}(pt; \ZZ)$-algebra.
\end{enumerate}
\end{proposition}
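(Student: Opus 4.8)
The plan is to sidestep the orbifold machinery of Section \ref{sec:GenEqCohom} altogether and reduce the integral statement to the original generalized GKM theory of \cite{HHH}, after which the translation into piecewise algebras follows the template of \cite[Theorem 5.5]{HHRW}. First I would apply Theorem \ref{thm:q-cw_srtucture} to a retraction sequence of $Q$, obtaining the $T^n$-equivariant stratification $X_1 \subseteq \cdots \subseteq X_\ell = X_Q$. The divisive hypothesis is exactly the statement that every finite group $K_j$ in \eqref{eq_T_bundle} is trivial; hence each $\xi^{K_j} \colon \CC^{k_j}/K_j \to \eta^{-1}(v_j)$ is an honest $T^n$-equivariant complex vector bundle over the fixed point $\eta^{-1}(v_j)$, and the successive quotients $X_j/X_{j-1} \simeq Th(\xi^{K_j})$ are ordinary equivariant Thom spaces. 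Thus the stratification is a genuine $T^n$-invariant stratification in the sense of \cite[Section 2]{HHH}, so there is no obstruction to working over $\ZZ$ rather than $\QQ$.

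Next I would verify assumptions (A1)--(A3) integrally. The character splitting $\CC^{k_j} \cong \bigoplus_s \CC(\alpha_s)$ and the description of the attaching maps $f_{js}$ proceed exactly as in the proof verifying (A1)--(A3) in Section \ref{sec:toric_variety}, except that with $K_j$ trivial the summands $\CC(\alpha_s) \to \eta^{-1}(v_j)$ are honest line bundles with integral equivariant Euler classes $e_{T^n}(\xi^{\alpha_s}) \in E^*_{T^n}(pt;\ZZ)$. For $H^*_{T^n}$ these are the linear forms in $\ZZ[u_1,\dots,u_n]$ associated to the primitive normals $\mu_1,\dots,\mu_{k_j}$, and the divisive condition guarantees that at each stage these vectors extend to an integral basis of $\ZZ^n$, so the Euler classes remain non-zero-divisors and pairwise relatively prime over $\ZZ$; the analogous statements for $K^*_{T^n}$ and $MU^*_{T^n}$ follow likewise. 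With (A1)--(A3) in force over $\ZZ$, the integral version underlying Proposition \ref{prop:equivariant_k-theory}, namely \cite[Theorem 3.1]{HHH} itself, produces an injection $\iota^* \colon E^*_{T^n}(X_Q;\ZZ) \hookrightarrow \prod_j E^*_{T^n}(pt;\ZZ)$ whose image is cut out by the same divisibility relations read integrally.

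It then remains to identify this image with the integral piecewise algebra $\lim \mathcal{EV}$, that is, with $PP_\ZZ[\Sigma_Q]$, $PL_\ZZ[\Sigma_Q]$, or $PC_\ZZ[\Sigma_Q]$ according to the theory. Here I would transcribe the argument of \cite[Theorem 5.5]{HHRW}, substituting the divisive toric Euler classes $e_{T^n}(\xi^{\alpha_s})$ for the weighted-projective-space Euler classes used there; the computation of $\mc{EV}(\sigma)$ recalled in \cite[Section 4, (4.11)]{HHRW} presents each $\mc{EV}(\sigma)$ as the quotient of $E^*_{T^n}(pt)$ by precisely these Euler classes, so the face-compatibility relations defining $\lim \mathcal{EV}$ match the divisibility relations describing $\im \iota^*$. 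The main obstacle will be carrying out this last matching over $\ZZ$ rather than $\QQ$: rationally it is automatic, as in Theorem \ref{thm_main_E*_T=PP}, but integrally one must rule out any torsion or unit-ambiguity, and this rests entirely on the divisive hypothesis, which forces each relevant Euler class to be, up to a unit, a coordinate-type generator rather than a proper multiple. Verifying that this makes the integral divisibility relations equivalent to the integral face-compatibility relations, uniformly across the three theories, is the step requiring the most care.
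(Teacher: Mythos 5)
Your proposal is correct and follows the paper's own route: the divisive hypothesis makes every $K_j$ in \eqref{eq_T_bundle} trivial, so the stratification of Theorem \ref{thm:q-cw_srtucture} becomes a genuine $T^n$-equivariant stratification in the sense of \cite[Section 2]{HHH}, and the paper concludes exactly as you do, by applying the integral results of \cite{HHH} together with the piecewise-algebra identification of \cite[Theorem 5.5]{HHRW}. Your explicit integral verification of (A1)--(A3) (primitivity of the weights forced by the divisive condition, hence non-zero-divisor and pairwise coprime Euler classes over $\ZZ$) only spells out what the paper leaves implicit in its brief citation of \cite{HHH, HHRW}.
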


To exhibit an example of piecewise algebra, we revisit the $3$-dimensional Gelfand--Zetlin polytope discussed in Example \ref{ex_GZ_polytope} and Example \ref{ex_GZ_divisive}.

\begin{example}\label{ex_GZ_toric_PP}
Let $P$ be the $3$-dimensional Gelfand--Zetlin polytope and $\Sigma_P$ its normal fan. 
For each face $Q$ in $P$ of dimension $k$ $(0 \leq k\leq 3)$, we denote by $\sigma_Q$ the associated $(3-k)$-dimensional cone in $\Sigma_P$, i.e., $\sigma_Q$ is the cone generated by normal vectors of facets intersecting the relative interior of $Q$. For example, 
$\sigma_{(1,1,1)}$ is the cone generated by $(0,-1,1), (0,-1,0), (1,0,-1)$ and $(1,0,0)$. Particularly when $E_{T^3}^\ast=H_{T^3}^\ast$,
$$\mathcal{EV}(\sigma_v)\cong H_{T^3}^\ast(pt) \cong \ZZ[u_1, u_2, u_3]$$
for each vertex $v$ of $P$. 
Hence, the ring $PP[\Sigma_P]$ of piecewise polynomials with rational coefficients is the set of tuples 
$$(f_{\sigma_v})_{v\in  V(P)} \in \bigoplus_{v\in V(P)}\ZZ[u_1, u_2, u_3]$$ 
such that $f_{\sigma_v}|_{\sigma_e} =f_{\sigma_w}|_{\sigma_e}$ whenever $v$ and $w$ are connected by an edge $e$. 
Here, we list some of its elements as follows:
\begin{table}[h]
\begin{tabular}{ccccccc}
$\sigma_{(1,1,1)}$ & $\sigma_{(0,1,0)}$ & $\sigma_{(0,2,0)}$ & $\sigma_{(0,1,1)}$ & $\sigma_{(1,2,1)}$ & $\sigma_{(1,2,2)}$ & $\sigma_{(0,2,2)}$ \\ \hline \hline
1 & 1 & 1 & 1 & 1 & 1 & 1 \\ \hline
0 & 0 & $u_2$ & 0 & $u_2$ & $u_2+u_3$ & $u_2+u_3$ \\ \hline
$u_1+u_3$ & 0 & 0 & $u_3$ & $u_1+u_3$ & $u_1+u_3$ & $u_3$ \\ \hline
0 & 0 & 0 & 0 & $u_2(u_1+u_3)$ & $u_1(u_2+u_3)$ & 0 \\ \hline
0 & 0 & 0 & 0 & 0 & $u_3(u_2+u_3)$ & $u_3(u_2+u_3)$ \\ \hline
0 & 0 & 0 & 0 & 0 &  $u_1u_3(u_2+u_3)$ & 0 \\ \hline
\end{tabular}
\caption{Some elements in $PP[\Sigma_P]$.}
\label{tab_elements_of_PP[GZ]}
\end{table}

\noindent See Figure \ref{Fig_PPonQ} for the description of those elements on the original polytope $P$. 
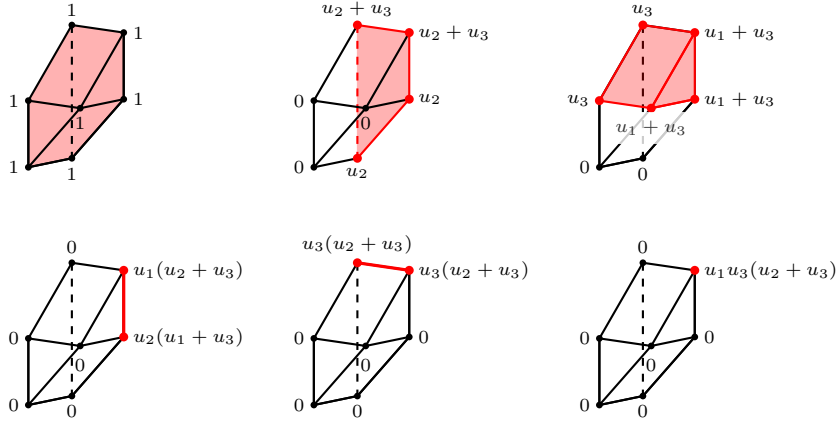
\begin{figure}[h]
\begin{tikzpicture}
\begin{scope}[scale=0.9, yscale=0.9]
\tdplotsetmaincoords{80}{40}
\begin{scope}[tdplot_main_coords]
\draw[thick, fill=red!30] (0,1,0)--(0,2,0)--(1,2,1)--(1,2,2)--(0,2,2)--(0,1,1)--cycle;
\draw[thick, dashed] (0,2,2)--(0,2,0);
\draw[thick] (0,1,1)--(0,1,0)--(0,2,0)--(1,2,1)--(1,2,2);
\draw[thick] (1,1,1)--(0,1,0);
\draw[thick] (1,1,1)--(0,1,1);
\draw[thick] (1,1,1)--(1,2,1);
\draw[thick] (1,1,1)--(1,2,2);

\node[fill, circle, inner sep=0.9pt] at (0,2,2) {};
\node[fill, circle, inner sep=.9pt] at (0,1,0) {};
\node[fill, circle, inner sep=.9pt] at (0,1,1) {};
\node[fill, circle, inner sep=.9pt] at (1,1,1) {};
\node[fill, circle, inner sep=.9pt] at (1,2,1) {};
\node[fill, circle, inner sep=.9pt] at (1,2,2) {};
\node[fill, circle, inner sep=.9pt] at (0,2,0) {};

\node[above] at (0,2,2) {\scriptsize $1$};
\node[right] at (1,2,2) {\scriptsize $1$};
\node[right] at (1,2,1) {\scriptsize $1$};
\node[below] at (0,2,0) {\scriptsize $1$};
\node[left] at (0,1,0) {\scriptsize $1$};
\node[left] at (0,1,1) {\scriptsize $1$};
\node[below] at (1,1,1) {\scriptsize $1$};
\end{scope}

\begin{scope}[xshift=120, tdplot_main_coords] 
\draw[fill=red!30, red!30] (0,2,0)--(1,2,1)--(1,2,2)--(0,2,2)--cycle;
\draw[thick, red] (0,2,0)--(1,2,1)--(1,2,2)--(0,2,2);
\draw[thick, red, dashed] (0,2,2)--(0,2,0);
\draw[thick] (0,1,0)--(0,2,0);
\draw[thick] (0,1,0)--(0,1,1);
\draw[thick] (0,1,1)--(0,2,2);
\draw[thick] (1,1,1)--(0,1,0);
\draw[thick] (1,1,1)--(0,1,1);
\draw[thick] (1,1,1)--(1,2,1);
\draw[thick] (1,1,1)--(1,2,2);

\node[fill=red, circle, inner sep=1.2pt] at (0,2,2) {};
\node[fill, circle, inner sep=.9pt] at (0,1,0) {};
\node[fill, circle, inner sep=.9pt] at (0,1,1) {};
\node[fill, circle, inner sep=.9pt] at (1,1,1) {};
\node[fill=red, circle, inner sep=1.2pt] at (1,2,1) {};
\node[fill=red, circle, inner sep=1.2pt] at (1,2,2) {};
\node[fill=red, circle, inner sep=1.2pt] at (0,2,0) {};

\node[above] at (0,2,2) {\scriptsize $u_2+u_3$};
\node[right] at (1,2,2) {\scriptsize $u_2+u_3$};
\node[right] at (1,2,1) {\scriptsize $u_2$};
\node[below] at (0,2,0) {\scriptsize $u_2$};
\node[left] at (0,1,0) {\scriptsize $0$};
\node[left] at (0,1,1) {\scriptsize $0$};
\node[below] at (1,1,1) {\scriptsize $0$};
\end{scope}

\begin{scope}[xshift=240, tdplot_main_coords] 
\draw[fill=red!30, red!30] (1,1,1)--(1,2,1)--(1,2,2)--cycle;
\draw[thick] (0,1,0)--(0,2,0)--(1,2,1)--(1,2,2)--(0,2,2)--(0,1,1)--cycle;
\draw[thick, dashed] (0,2,2)--(0,2,0);
\draw[thick] (0,1,1)--(0,1,0)--(0,2,0)--(1,2,1)--(1,2,2);

\draw[fill=red!30, red!30] (1,1,1)--(1,2,1)--(1,2,2)--cycle;
\draw[fill=red, red, opacity=0.3] (1,1,1)--(1,2,2)--(0,2,2)--(0,1,1)--cycle;
\draw[thick, red] (1,1,1)--(1,2,1)--(1,2,2)--cycle;
\draw[thick] (1,1,1)--(0,1,0);
\draw[thick, red] (1,1,1)--(0,1,1);
\draw[thick, red] (1,1,1)--(1,2,1);
\draw[thick, red] (1,1,1)--(1,2,2);
\draw[thick, red] (0,1,1)--(0,2,2)--(1,2,2);

\node[fill=red, circle, inner sep=1.2pt] at (0,2,2) {};
\node[fill, circle, inner sep=.9pt] at (0,1,0) {};
\node[fill=red, circle, inner sep=1.2pt] at (0,1,1) {};
\node[fill=red, circle, inner sep=1.2pt] at (1,1,1) {};
\node[fill=red, circle, inner sep=1.2pt] at (1,2,1) {};
\node[fill=red, circle, inner sep=1.2pt] at (1,2,2) {};
\node[fill, circle, inner sep=.9pt] at (0,2,0) {};

\node[above] at (0,2,2) {\scriptsize $u_3$};
\node[right] at (1,2,2) {\scriptsize $u_1+u_3$};
\node[right] at (1,2,1) {\scriptsize $u_1+u_3$};
\node[below] at (0,2,0) {\scriptsize $0$};
\node[left] at (0,1,0) {\scriptsize $0$};
\node[left] at (0,1,1) {\scriptsize $u_3$};
\node[fill=white, opacity=0.8, below] at (1,1,0.95) {\scriptsize $u_1+u_3$};
\end{scope}

\begin{scope}[yshift=-90, tdplot_main_coords]
\draw[thick] (0,1,0)--(0,2,0)--(1,2,1)--(1,2,2)--(0,2,2)--(0,1,1)--cycle;
\draw[thick, dashed] (0,2,2)--(0,2,0);
\draw[thick] (0,1,1)--(0,1,0)--(0,2,0)--(1,2,1)--(1,2,2);
\draw[thick] (1,1,1)--(0,1,0);
\draw[thick] (1,1,1)--(0,1,1);
\draw[thick] (1,1,1)--(1,2,1);
\draw[thick] (1,1,1)--(1,2,2);
\draw[very thick, red] (1,2,1)--(1,2,2);

\node[fill, circle, inner sep=0.9pt] at (0,2,2) {};
\node[fill, circle, inner sep=.9pt] at (0,1,0) {};
\node[fill, circle, inner sep=.9pt] at (0,1,1) {};
\node[fill, circle, inner sep=.9pt] at (1,1,1) {};
\node[fill=red, circle, inner sep=1.2pt] at (1,2,1) {};
\node[fill=red, circle, inner sep=1.2pt] at (1,2,2) {};
\node[fill, circle, inner sep=.9pt] at (0,2,0) {};

\node[above] at (0,2,2) {\scriptsize $0$};
\node[right] at (1,2,2) {\scriptsize $u_1(u_2+u_3)$};
\node[right] at (1,2,1) {\scriptsize $u_2(u_1+u_3)$};
\node[below] at (0,2,0) {\scriptsize $0$};
\node[left] at (0,1,0) {\scriptsize $0$};
\node[left] at (0,1,1) {\scriptsize $0$};
\node[below] at (1,1,0.95) {\scriptsize $0$};

\end{scope}

\begin{scope}[xshift=120, yshift=-90,  tdplot_main_coords]
\draw[thick] (0,1,0)--(0,2,0)--(1,2,1)--(1,2,2)--(0,2,2)--(0,1,1)--cycle;
\draw[thick, dashed] (0,2,2)--(0,2,0);
\draw[thick] (0,1,1)--(0,1,0)--(0,2,0)--(1,2,1)--(1,2,2);
\draw[thick] (1,1,1)--(0,1,0);
\draw[thick] (1,1,1)--(0,1,1);
\draw[thick] (1,1,1)--(1,2,1);
\draw[thick] (1,1,1)--(1,2,2);
\draw[very thick, red] (1,2,2)--(0,2,2);

\node[fill=red, circle, inner sep=1.2pt] at (0,2,2) {};
\node[fill, circle, inner sep=.9pt] at (0,1,0) {};
\node[fill, circle, inner sep=.9pt] at (0,1,1) {};
\node[fill, circle, inner sep=.9pt] at (1,1,1) {};
\node[fill, circle, inner sep=.9pt] at (1,2,1) {};
\node[fill=red, circle, inner sep=1.2pt] at (1,2,2) {};
\node[fill, circle, inner sep=.9pt] at (0,2,0) {};

\node[above] at (0,2,2) {\scriptsize $u_3(u_2+u_3)$};
\node[right] at (1,2,2) {\scriptsize $u_3(u_2+u_3)$};
\node[right] at (1,2,1) {\scriptsize $0$};
\node[below] at (0,2,0) {\scriptsize $0$};
\node[left] at (0,1,0) {\scriptsize $0$};
\node[left] at (0,1,1) {\scriptsize $0$};
\node[below] at (1,1,0.95) {\scriptsize $0$};

\end{scope}

\begin{scope}[xshift=240, yshift=-90,  tdplot_main_coords]
\draw[thick] (0,1,0)--(0,2,0)--(1,2,1)--(1,2,2)--(0,2,2)--(0,1,1)--cycle;
\draw[thick, dashed] (0,2,2)--(0,2,0);
\draw[thick] (0,1,1)--(0,1,0)--(0,2,0)--(1,2,1)--(1,2,2);
\draw[thick] (1,1,1)--(0,1,0);
\draw[thick] (1,1,1)--(0,1,1);
\draw[thick] (1,1,1)--(1,2,1);
\draw[thick] (1,1,1)--(1,2,2);

\node[fill, circle, inner sep=0.9pt] at (0,2,2) {};
\node[fill, circle, inner sep=.9pt] at (0,1,0) {};
\node[fill, circle, inner sep=.9pt] at (0,1,1) {};
\node[fill, circle, inner sep=.9pt] at (1,1,1) {};
\node[fill, circle, inner sep=.9pt] at (1,2,1) {};
\node[fill=red, circle, inner sep=1.2pt] at (1,2,2) {};
\node[fill, circle, inner sep=.9pt] at (0,2,0) {};

\node[above] at (0,2,2) {\scriptsize $0$};
\node[right] at (1,2,2) {\scriptsize $u_1u_3(u_2+u_3)$};
\node[right] at (1,2,1) {\scriptsize $0$};
\node[below] at (0,2,0) {\scriptsize $0$};
\node[left] at (0,1,0) {\scriptsize $0$};
\node[left] at (0,1,1) {\scriptsize $0$};
\node[below] at (1,1,0.95) {\scriptsize $0$};

\end{scope}
\end{scope}
\end{tikzpicture}
\caption{Dual description of Table \ref{tab_elements_of_PP[GZ]}.}
\label{Fig_PPonQ}
\end{figure}
\end{example}

\begin{remark}
The first element in Table \ref{tab_elements_of_PP[GZ]} or Figure \ref{Fig_PPonQ} is the multiplicative identity of $PP[\Sigma_P]$. The faces or the unions of faces in Figure \ref{Fig_PPonQ} whose vertices have nontrivial elements in $\ZZ[u_1, u_2, u_3]$ are \emph{dual Kogan faces} \cite{KoMi} and polynomials are related to Thom classes defined in \cite{MMP, MP}. 
\end{remark}

%
%
%
%
%
%

%
%

\end{document}